\theoremstyle{plain}
\newtheorem{thm}{Theorem}[section]
\newtheorem{lemma}[thm]{Lemma}
\newtheorem{prop}[thm]{Proposition}
\newtheorem{cor}[thm]{Corollary}
\theoremstyle{definition}
\newtheorem{defn}[thm]{Definition}
\theoremstyle{remark}
\newcommand{\nc}{\newcommand}
\def\makeop#1{\expandafter\def\csname#1\endcsname
  {\mathop{\rm #1}\nolimits}\ignorespaces}
\def\makebb#1{\expandafter\def
  \csname bb#1\endcsname{{\mathbb{#1}}}\ignorespaces}
\def\makebf#1{\expandafter\def\csname bf#1\endcsname{{\bf
      #1}}\ignorespaces}
\def\makegr#1{\expandafter\def
  \csname gr#1\endcsname{{\mathfrak{#1}}}\ignorespaces}
\def\makescr#1{\expandafter\def
  \csname scr#1\endcsname{{\EuScript{#1}}}\ignorespaces}
\def\makecal#1{\expandafter\def\csname cal#1\endcsname{{\mathcal
      #1}}\ignorespaces}
\def\doLetters#1{#1A #1B #1C #1D #1E #1F #1G #1H #1I #1J #1K #1L #1M
                 #1N #1O #1P #1Q #1R #1S #1T #1U #1V #1W #1X #1Y #1Z}
\def\doletters#1{#1a #1b #1c #1d #1e #1f #1g #1h #1i #1j #1k #1l #1m
                 #1n #1o #1p #1q #1r #1s #1t #1u #1v #1w #1x #1y #1z}
     \def\qed{\qedmark\medbreak}%
\def\qedmark{{\enspace\vrule height 6pt width 5pt depth 1.5pt}}%
\def\Gm{{{\bbG}_{\rm m}}}   
\def\sep{{\rm sep}}
\def\Spec{{\rm Spec}\,}
\newcommand{\Z}{\mathbb Z}
\newcommand{\Q}{\mathbb Q}
\newcommand{\isoto}{\stackrel{\sim}{\longrightarrow}}
\nc{\embed}{\hookrightarrow}
\nc{\ol}{\overline}
\nc{\wt}{\widetilde}
\nc{\opp}{\mathrm{opp}}
\def\LD{{\rm LD}}
\begin{document}
\renewcommand{\thefootnote}{\fnsymbol{footnote}}
\setcounter{footnote}{-1}
\numberwithin{equation}{section}
%\numberwithin{section}{chapter}

%\usepackage[notref,notcite]{showkeys}

\title[Embeddings over global fields]
{Embeddings of fields into simple algebras over global fields}
\author{Sheng-Chi Shih, Tse-Chung Yang and Chia-Fu Yu}
\address{
(Shih) Institute of Mathematics, Academia Sinica\\
Astronomy Mathematics Building \\
No. 1, Roosevelt Rd. Sec. 4 \\
Taipei, Taiwan, 10617}
\email{r95221018@ntu.edu.tw}

\address{
(Yang) Department of Mathematics, National Taiwan University\\
Astronomy Mathematics Building \\
No. 1, Roosevelt Rd. Sec. 4 \\
Taipei, Taiwan, 10617}
\email{d97221007@ntu.edu.tw}

\address{
(Yu) Institute of Mathematics, Academia Sinica and NCTS (Taipei Office)\\
Astronomy Mathematics Building \\
No. 1, Roosevelt Rd. Sec. 4 \\
Taipei, Taiwan, 10617}
\email{chiafu@math.sinica.edu.tw}

%\date{June 23, 2000}

\date{\today}
% \subjclass[2000]{11R52,11R58}
% \keywords{mass formula, global function fields, central division
% algebras}

\def\c{{\rm c}}
\def\i{{\rm i}}
\def\Mat{{\rm Mat}}

\begin{abstract}
Let $F$ be a global field, $A$ a central simple algebra over $F$ and
$K$ a finite (separable or not) field extension of $F$ with degree
$[K:F]$ dividing the degree of $A$ over $F$.
An embedding of $K$ into $A$ over
$F$ exists implies an embedding exists locally everywhere. In this
paper we give detailed discussions about
% quite complete account
%study in detail
%on conditions under which
when the converse (i.e. the
local-global principle in question) may hold.

% In this paper we study the local-global (Hasse) principle
% for the embedding of an
% extension field $K$ in a central simple algebra $A$ over a global
% field $F$. We find a necessary and sufficient condition for $K$ and
% $A$ so that the Hasse
% principle holds. We also deal with the Hasse principle for a family of
% pairs $(K,A)$ with both degrees $[K:F]$ and $\deg(A)$ constant. As a
% consequence of our results, we find out
% all positive integers $\delta$ which satisfy the property that
% for any central simple algebra $A$ over $F$ of degree $\delta$ and any
%   finite field extension $K$ of degree $[K:F]\mid \delta$ the Hasse
%   principle for $(K,A)$ holds.
\end{abstract}

\maketitle

%\tableofcontents   % Table of Contents

\section{Introduction}
\label{sec:01}

The topic on central simple algebras over global fields is a central theme
in number theory. One way to extract information of a
central simple algebra is to study its maximal subfields.
% In many situations we need to understand all
% maximal subfields in a given central simple algebra.
For example, this information would be useful when one attempts to
analyze the terms in the geometric side of trace formulas.
Indeed, in the geometric side of the trace formula for the multiplicative
group of a given central simple algebra,
certain major terms (elliptic ones) are described by conjugacy classes of
these maximal subfields.
% Hence the information of maximal subfields
% becomes useful
This leads to a very important approach, due to Eichler
\cite{eichler:crelle55}, to calculating the class
number (of an open compact level subgroup), as the class number
can be written as the trace of the characteristic function supported on
this level subgroup.
%
%As an arithmetic application of the trace formula, the class number
%can be computed as
%For example, when we
%study the geometric side of the trace formula this information becomes
%helpful.
In his pioneer work \cite{eichler:crelle55} Eichler established
results on the trace formula for computing the class numbers and the
type numbers of  a maximal order or a hereditary order in a given
quaternion algebra over any global field. He reduced some of the problem of
computing class numbers to computing the so called {\it
optimal embeddings} among orders in maximal subfields and those in the
quaternion algebras, which are of purely local nature.
See Eichler~\cite{eichler:crelle55} and Vign\'eras~\cite{vigneras} for
more details.

% See Eichler~\cite{eichler:crelle55}, Pizer~\cite{pizer:crelle73}
% and Vign\'eras~\cite{vigneras} for more details. For some further
% developments along this direction as well as some related questions
% arising from representation theory, we refer to Benz~\cite{benz:67},
% Pizer~\cite{pizer:quat76}, Fr\"ohlich~\cite{frohlich:embed87},
% Denert-Van Geel~\cite{denert-geel:classno88},
% Bushnell-Kutzko~\cite{bushnell-kutzko:93},
% Broussous~\cite{broussous:embed98} and
% E.-W.~Zink~\cite{ewzink:embed99}.

A useful tool to study maximal subfields is the Hasse principle, which
enables us to describe the properties of these subfields from local
information. A naive attempt along the same direction is figuring out
whether one can describe subfields of
a central simple algebra over a global field $F$ from local
information. In this paper we study embeddings of a finite field
extension of $F$, which does not necessarily have
the maximal degree, into a central simple algebra over $F$,
and attempted to clarify related problems in this situation.
More precisely, consider a finite-dimensional central
simple algebra $A$ over a global field $F$ and a finite (separable or
not) field
extension $K$ of $F$ whose degree divides the {\it degree} $\deg A$ of
$A$ over $K$, % (recall $\deg A:=\sqrt{[A:F]}$).
that is, $[K:F]\mid\! \sqrt{[A:F]}=\deg(A)$.
Naturally one considers the
following basic questions:
\begin{itemize}
\item [(Q1)] What is the necessary and sufficient condition for which
  the field $K$ can be $F$-linearly embedded into $A$?

\item [(Q2)] What is the necessary and sufficient condition for which
  the $F_v$-algebra $K_v$ can be $F_v$-linearly embedded into the
  $F_v$-algebra $A_v$? Here $v$ is a place of $F$,
  $F_v$ denotes the completion
  of $F$ at $v$, $K_v:=K \otimes_F F_v$ and $A_v:=A \otimes_F F_v$.

\item [(Q3)] Does the local-global principle for embedding
the field $K$ into $A$ as $F$-algebras hold? That is, if $K_v$ can be
$F_v$-linearly embedded in $A_v$ for all places $v$ of $F$,
then can $K$ be $F$-linearly embedded in $A$?
\end{itemize}
% implication
% \begin{equation}
  % \label{eq:11}
  % \Hom_{F_v} (K_v, A_v)\neq \emptyset\quad  \forall\, v \implies
  % \Hom_{F} (K, A)\neq   \emptyset \ ?
%\end{equation}

We say
that the Hasse principle for a triple $(K,A,F)$
as above holds if the
question (Q3) for $K$ and $A$ over a global field $F$ has a
positive answer.
When the field extension $K$ has the maximal degree
$\deg A$, this result is well-known
and useful; see Pierce~\cite[Section 18.4]{pierce},
Prasad-Rapinchuk \cite[Proposition
A.1]{prasad-rapinchuk:metakernel96} and
\cite[Proposition~2.7]{prasad-rapinchuk:embed10}.
However, the case when $K$ does not have the maximal degree is not
explored in the literature.

% seems to not
% yet be explored in the literature.

% \cite[Proposition~2.7]{prasad-rapinchuk:embed10}).
% This follows from the answers to the questions (Q1) and (Q2).
% However, the case when $K$ does not have the
% maximal degree seems not yet solved.
% %the local-global principle to (Q3)

We now outline the contents of this paper.
In the first part of this paper
we answer the questions (Q1), (Q2) and (Q3).
In \cite{yu:embed}, the third named author of the present paper
studies the problem of
embeddings of one semi-simple algebra into another one over an
arbitrary ground field. In Section 2 we recall some
results of embeddings obtained in
\cite{yu:embed} and provide some proofs of them for
the reader's convenience.
These give rise
to a numerical criterion for the question (Q1) over an arbitrary field; see
Lemma~\ref{24} for the precise statement.
In Section 3 we apply results of Section~\ref{sec:02}
to the case where the base
field is either a local field or a global field.
This yields a more explicit numerical criterion for (Q1) and (Q2),
respectively. Using these
criteria, we then show that for any global field $F$,
there is a Galois extension $K$ over $F$
of degree $8$ and a central simple algebra $A$ over $F$ of degree 24
for which the Hasse principle does not hold;
see an example in Section~\ref{sec:34}. Also see
Proposition~\ref{14} below for other cases for which the Hasse
principle fails .

We now describe the results in this part.
Let $K$ and $A$ be as above. Let $A=\Mat_n(\Delta)$, where $\Delta$ is
the division part of $A$ and $n:=\c(A)$ is called the {\it capacity} of
$A$ (see Definition~\ref{21}). Recall the degree $\deg (\Delta)$ of $\Delta$ is $\sqrt{[\Delta:F]}$.
For any place $v$ of $F$, let $K_v:=K\otimes_F F_v=\prod_{w|v} K_w$
be the product of local fields. Let $V^F$ and $V^K$
denote the sets of places of
$F$ and $K$, respectively.
Let $d_v$ be the order of the class $[A_v]$ in the
Brauer group of $F_v$, which is also the index of $A_v$.
By an $F$-embedding of $K$ into $A$ we mean an
$F$-linear embedding of $K$ into $A$. 
The notion of $F_v$-embedding of $K_v$ into $A_v$ is defined similarly.

% Similarly, the notion of
% $F_v$-embedding of $K_v$ into $A_v$ is understood.

\begin{thm}\label{11}
Let the notation be as above.
\begin{enumerate}
\item The set of $A_v^\times$-conjugacy classes of
 $F_v$-embeddings of $K_v$ into $A_v$ is
 in bijection with the following set
 \[ \calE_{F_v}(K_v, A_v):=\left \{(x_w)_{w|v}\mid x_w\in \bbN,\
   \sum_{w|v}
 \ell_w x_w=\c(A_v)\right \}, \]
 where \[ \ell_{w}:=[K_w:F_v]/\gcd([K_w:F_v],\deg(\Delta)). \]
 In
 particular, there is an $F_v$-embedding of $K_v$ into $A_v$ if
  and only if the finite set $\calE_{F_v}(K_v, A_v)$ is non-empty.
\item There is an $F$-embedding of $K$ into $A$ if and only if
  \[ [K:F]\mid n\,\c(\Delta\otimes_F K), \] where $\c(\Delta\otimes_F
  K)$ is the
  capacity of the central simple algebra $\Delta\otimes_F K$ over
  $K$.
\item The number  $\c(\Delta\otimes_F K)$ can be computed as follows:
  \begin{enumerate}
  \item[(i)] The degree $\delta_0$ of $\Delta$
    is the least common
    multiple of all $d_v's$ for $v\in V^F$ (the Hasse-Brauer-Noether
    theorem).
  \item[(ii)] For each $w|v$, the order $d_w'$ of the class
  $[A_v\otimes_{F_v} K_w]$
    in the Brauer group of $K_w$ is given by
\[ d_w'=d_v/\gcd([K_w:F_v], d_v). \]
  \item[(iii)] The order $\delta'$ of the class $[A\otimes_F K]$ in
    the Brauer group of $K$ is the least common multiple of $d_w'$ for
    all $w\in V^K$ (the Hasse-Brauer-Noether theorem).
  \item[(iv)] The capacity  $\c(\Delta\otimes_F K)$ is given by  
  $\c(\Delta\otimes_F K)=\delta_0/\delta'$.
  \end{enumerate}
\end{enumerate}

\end{thm}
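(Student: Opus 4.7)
The plan for part (1) is to translate embeddings into modules and apply Morita equivalence. By the Skolem--Noether theorem, $A_v^\times$-conjugacy classes of $F_v$-embeddings $K_v\hookrightarrow A_v$ correspond bijectively to isomorphism classes of $K_v$-module structures on the simple right $A_v$-module $M$ that commute with the $A_v^{\mathrm{op}}$-action. Writing $A_v=\Mat_{\c(A_v)}(\Delta_v)$ with $\Delta_v$ the local division part, $M$ is $\Delta_v^{\c(A_v)}$ as a right $\Delta_v$-module, so the data amounts to a $K_v\otimes_{F_v}\Delta_v$-module structure on $M$. Decomposing $K_v=\prod_{w\mid v}K_w$ gives $K_v\otimes_{F_v}\Delta_v=\prod_w(K_w\otimes_{F_v}\Delta_v)$; each factor is central simple over $K_w$ of degree $\deg(\Delta_v)$ with index computed by the invariant formula of (3)(ii). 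By Wedderburn each factor has a unique simple module, and a direct dimension count identifies its rank over $\Delta_v$ with $\ell_w$. Such module structures on $M$ then biject with tuples $(x_w)$ of nonnegative multiplicities subject to the single constraint $\sum_w\ell_w x_w=\c(A_v)$.

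Part (2) applies the same recipe globally. Since $K$ is a field, $K\otimes_F\Delta$ is central simple over $K$ of degree $\deg(\Delta)$ and index $\delta'$ (the index of $\Delta\otimes_F K$), so the same dimension count shows that its unique simple module has rank $[K:F]/\c(\Delta\otimes_F K)$ as a right $\Delta$-module. An $F$-embedding $K\hookrightarrow A=\Mat_n(\Delta)$ corresponds to a $K\otimes_F\Delta$-module structure on $\Delta^n$, and such a structure exists if and only if that rank divides $n$, which is exactly the divisibility $[K:F]\mid n\cdot\c(\Delta\otimes_F K)$.

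Part (3) is a direct Brauer-group calculation. Items (i) and (iii) are the Hasse--Brauer--Noether theorem: the order of a global Brauer class is the LCM of the orders of its localizations. Item (ii) follows from the local class field theory fact that restriction $\Br(F_v)\to\Br(K_w)$ multiplies Hasse invariants by $[K_w:F_v]$, whence the order of $[A_v\otimes_{F_v}K_w]$ is $d_v/\gcd(d_v,[K_w:F_v])$ (with minor care at archimedean places). Item (iv) simply records that $\Delta\otimes_F K$ has degree $\delta_0$ and index $\delta'$, so its capacity is $\delta_0/\delta'$.

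The main technical obstacle is the careful rank bookkeeping in (1) and (2): verifying the bijection between conjugacy classes of embeddings and isomorphism classes of module structures on the simple $A_v$-module (via Skolem--Noether plus uniqueness of the simple module), and correctly computing the $\Delta_v$-rank of the simple $K_w\otimes_{F_v}\Delta_v$-module. No modification is required for inseparable $K/F$, since $K\otimes_F\Delta$ remains a central simple $K$-algebra for any field extension.
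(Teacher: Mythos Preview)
Your approach is essentially the paper's own (Lemmas~2.3--2.5 and Proposition~3.2): translate embeddings into $(K,\Delta)$-bimodule structures on the underlying $\Delta$-module, decompose along the simple factors of $K_v=\prod_{w\mid v}K_w$, and read off the constraint from the $\Delta_v$-rank of the unique simple $K_w\otimes_{F_v}\Delta_v$-module. Two small slips to fix. First, in your opening sentence you want the $K_v$-action to commute with the right $\Delta_v$-action (your local division part), not with the $A_v^{\mathrm{op}}$-action: commuting with all of $A_v$ would force the $K_v$-action into $\End_{A_v}(M)\cong\Delta_v$, which is too small; your very next sentence (``the data amounts to a $K_v\otimes_{F_v}\Delta_v$-module structure on $M$'') shows you have the correct picture. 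Second, for \emph{embeddings} each $K_w$ must act nontrivially, so the multiplicities $x_w$ must be positive rather than merely nonnegative (the paper's $\bbN$ here is $\{1,2,\ldots\}$).
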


The proof of Theorem~\ref{11} is given in Sections \ref{sec:31} and
\ref{sec:32};  see especially Propositions~\ref{32}, \ref{33} and
\ref{34}.% Explain where is the proof.

In the last part of Section~\ref{sec:03}, we give a necessary and
sufficient condition for a pair $(K,A)$
so that the Hasse principle in question holds. Here we fix a global
field $F$. We associate to each pair $(K,A)$ an element
\[ \bar \bfx=(\bar \bfx_w)_{w\in V^K} \in \bigoplus_{w\in V^K} \Q/\Z \]
 as follows.
For any $w\in V^K$, put
\[ \bfx_w:=\frac{\c(A_v)\cdot \gcd([K_w:F_v], d_v)}{[K:F]}\in \Q_{>0} \]
and let $\bar \bfx_w$ be the class of $\bfx_w$ in $\Q/\Z$, where
$v$ is the place of $F$ below $w$.

%\begin{itemize}
% \item $A_v:=A\otimes_F F_v$ and
% $\c(A_v)$ denotes the capacity of the central simple algebra
% $A_v$, where $v$ is the place of $F$ below $w$,
%\item  $K_w$ is the
%completion of $K$ at $w$ and $k_w:=[K_w:F_v]$, and
%\item  $d_v$ is the index of the algebra $A_v$.
%\end{itemize}

We show that the element $\bar {\bfx}$ is an
obstruction class to the Hasse principle for $(K,A)$, i.e. if this
class does not vanish, then the Hasse principle for $(K,A)$ fails. We
also show that the vanishing of $\bar {\bfx}$ is the only obstruction
to the Hasse principle. Namely, we have the following result
(Theorem~\ref{36}).

\begin{thm} \label{12} Notations as above.
  An $F$-embedding of $K$ into $A$ exists if and only if an
  $F_v$-embedding of
  $K_v$ into $A_v$ exists for all $v\in V^F$ and that
  the element $\bar \bfx$ vanishes.
\end{thm}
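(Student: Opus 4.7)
My plan is to reduce Theorem~\ref{12} to a purely arithmetic equivalence by combining parts (2) and (3) of Theorem~\ref{11} and unwinding the definition of $\bfx_w$.

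By Theorem~\ref{11}(2) together with (3)(iv), a global $F$-embedding $K \hookrightarrow A$ exists iff $[K:F] \mid n\,\c(\Delta\otimes_F K) = n\delta_0/\delta'$, equivalently $[K:F]\,\delta' \mid n\delta_0$. Using (3)(iii), $\delta' = \lcm_{w \in V^K} d_w'$, so this single divisibility splits as: for every $w\in V^K$, $[K:F]\,d_w' \mid n\delta_0$. Substituting $d_w' = d_v/\gcd([K_w:F_v], d_v)$ from (3)(ii) and the identity $n\delta_0 = \c(A_v)\, d_v$ (which comes from $A_v \cong \Mat_{n_v}(\Delta_v)$ with $\deg\Delta_v = d_v$ and $n_v = \c(A_v)$), the condition at $w$ reduces to
\[ [K:F] \;\mid\; \c(A_v)\cdot \gcd([K_w:F_v],d_v), \]
which is exactly $\bfx_w\in\Z$, i.e.\ $\bar\bfx_w = 0$. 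Hence the existence of a global embedding is equivalent to $\bar\bfx = 0$.

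To recover the form stated in Theorem~\ref{12}, I would then observe that $\bar\bfx = 0$ already forces the local embeddings at every place to exist, so that the ``local hypothesis'' in the theorem is automatic. Specifically, when $\bar\bfx_w = 0$ for all $w\mid v$, the tuple $x_w := \bfx_w \in \bbN$ satisfies
\[ \sum_{w\mid v}\ell_w\, x_w \;=\; \sum_{w\mid v}\frac{[K_w:F_v]}{\gcd([K_w:F_v],d_v)}\cdot\frac{\c(A_v)\,\gcd([K_w:F_v],d_v)}{[K:F]} \;=\; \frac{\c(A_v)}{[K:F]}\sum_{w\mid v}[K_w:F_v] \;=\; \c(A_v), \]
using $\sum_{w\mid v}[K_w:F_v] = [K:F]$. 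Therefore $(\bfx_w)_{w\mid v}\in \calE_{F_v}(K_v,A_v)$, which by Theorem~\ref{11}(1) produces the desired $F_v$-embedding. In the reverse direction, a global embedding trivially yields local embeddings everywhere and, by the chain of equivalences above, kills $\bar\bfx$.

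The main obstacle is bookkeeping rather than any genuine mathematical difficulty: one must keep the global invariants $(n,\delta_0,\delta')$ and the local ones $(\c(A_v),d_v,d_w')$ consistent and carefully verify that the normalizations in Theorem~\ref{11}(3)(ii)--(iv) rearrange the per-place divisibility $[K:F]\,d_w'\mid n\delta_0$ into the integrality of the rational number $\bfx_w$. Once this substitution is carried out, the equivalence follows by pure arithmetic with no further input.
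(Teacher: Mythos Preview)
Your proof is correct and follows essentially the same route as the paper. Both arguments reduce the global criterion $k\mid nc$ (with $c=\delta_0/\delta'$ and $\delta'=\lcm_w d'_w$) to the family of divisibilities $k\,d'_w\mid n\delta_0$, then use $d'_w=d_v/\gcd(k_w,d_v)$ and $n\delta_0=\c(A_v)\,d_v$ to identify this with the integrality of $\bfx_w$; the observation that $\bar\bfx=0$ already forces $(\bfx_w)_{w\mid v}\in\calE_v$, hence the local embeddings, is likewise the same as the paper's remark at the start of the proof of Theorem~\ref{36}. The only cosmetic difference is that the paper first records the forward implication separately as Proposition~\ref{35} via the module-theoretic description of $e_v$, whereas you obtain both directions at once from the arithmetic chain of equivalences.
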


The second part of this paper deals with the Hasse principle for a
family of pairs $(K,A)$ over a fixed global field $F$
with both degrees $[K:F]$ and $\deg(A)$
constant. Let $(k,\delta)$ be a pair of positive integers with $k \mid
\delta$. We say that the Hasse principle for the pair $(k,\delta)$
holds if for any finite field extension $K$ over $F$ of degree $k$ and
any central simple algebra $A$ over $F$ of degree $\delta$, the Hasse
principle for the pair $(K,A)$ holds.

We describe the results in this part.
Let $(k,\delta)$ be a pair of positive integers as above.
For each partition $\lambda$ of $k$
\[ \lambda=(k_1,\dots, k_t), \quad k_1\le \dots \le k_t,
\quad \sum_{i=1}^t k_i=k \]
and a positive divisor $s|\delta$,
% $\mu=(s,d)$ of two positive integers $s$ and $d$ with
% $\delta=sd$,
we define a finite set
\[ \calE(\lambda,s):=\{(x_i)\in \bbN^t \mid \sum_{i=1}^t \ell_i
x_i=s\}, \]
where $\ell_i:=k_i/\gcd(k_i,d)$, and a vector $\bfx(\lambda,s)$
in $\Q^t$, where
\[ \bfx(\lambda, s):=(\bfx_i), \quad
\bfx_i=\bfx_i(\lambda,s):=s\gcd(k_i,d)/k, \quad \forall\, i=1,\dots, t. \]
%Here $(k_i,d)$ denotes the greatest common divisor of the integers
%$k_i$ and $d$; to
%indicate instead a pair of integers, say $k$ and $\delta$,
%we write {\it the pair} $(k,\delta)$.

Let $\LD (k,\delta)$ be the set consisting of all pairs $(\lambda,
 s)$, where
 \begin{itemize}
 \item $\lambda$ is a partition of $k$, and
 \item $s$ is a positive integer dividing $\delta$,
%   into the product of two positive integers,
 \end{itemize}
such that the set $\calE(\lambda,s)$ is non-empty.
(``\LD'' stands for all possible local decompositions).
The following result
gives a simple way to check the Hasse principle for a pair $(k,\delta)$.
% determines whether or not
% holds (Theorem~\ref{45}).
% reduces the Hasse principle to a simple combinatorial problem

\begin{thm}\label{13}
Let $(k,\delta)$ be a pair of positive integers with $k\mid\delta$.
\begin{itemize}
\item [(1)] Suppose for all elements $(\lambda,s)\in \LD(k,\delta)$,
  the vector $\bfx(\lambda,s)$ has integral components. Then the
  Hasse principle for $(k,\delta)$ holds.
\item [(2)] Suppose there is an element $(\lambda,s) \in
  \LD(k,\delta)$ such that $\bfx_i(\lambda,s)\not\in \bbN$ for some
  $i$. Then there is a finite separable field extension $K$ of degree
  $k$ over $F$ and a central division algebra $A$ of degree
  $\delta$ over $F$ so that the
  Hasse principle for $(K,A)$ fails. In particular, the Hasse
  principle for the pair $(k,\delta)$ fails.
\end{itemize}
\end{thm}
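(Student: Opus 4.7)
The plan is to bootstrap both halves from the local--global criterion of Theorem~\ref{12}, which asserts that the Hasse principle for a single pair $(K,A)$ holds exactly when $K_v\hookrightarrow A_v$ exists at every $v\in V^F$ and the obstruction $\bar\bfx=(\bar\bfx_w)_{w\in V^K}\in\bigoplus_w\Q/\Z$ vanishes. The setup step is to translate the local data $(K\otimes_F F_v,A_v)$ into the combinatorial datum $(\lambda,s)\in\LD(k,\delta)$: set $\lambda_v:=([K_w:F_v])_{w\mid v}$, which is a partition of $k$, and $s_v:=\c(A_v)$, a positive divisor of $\delta$, so that $d_v:=\delta/s_v$ is the local index. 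With these identifications, Theorem~\ref{11}(1) says that an $F_v$-embedding exists iff $\calE(\lambda_v,s_v)\neq\emptyset$, i.e.\ iff $(\lambda_v,s_v)\in\LD(k,\delta)$; and for $w=w_i\mid v$ the component $\bfx_w$ of Theorem~\ref{12} coincides with $\bfx_i(\lambda_v,s_v)$.

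For part~(1), I would fix an arbitrary $(K,A)$ admitting local embeddings at every place. The paragraph above places each $(\lambda_v,s_v)$ in $\LD(k,\delta)$, so the hypothesis of~(1) forces every $\bfx_i(\lambda_v,s_v)\in\bbN$; hence $\bar\bfx_w=0$ for every $w\in V^K$, and Theorem~\ref{12} delivers a global embedding.

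For part~(2), given a fixed $(\lambda,s)\in\LD(k,\delta)$ with $\bfx_{i_0}(\lambda,s)\notin\bbN$ I would concentrate the failure at a single place. Write $\lambda=(k_1,\dots,k_t)$, $d=\delta/s$, and pick three distinct places $v_0,v_1,v_2$ of $F$. Using weak approximation together with Krasner's lemma, I would construct a separable field extension $K/F$ of degree $k$ whose completion at $v_0$ decomposes into field factors of degrees $k_1,\dots,k_t$ and which splits completely at $v_1,v_2$. I would then take $A$ to be the central simple $F$-algebra specified by
\[\inv_{v_0}(A)=\tfrac1d,\quad \inv_{v_1}(A)=\tfrac1\delta,\quad \inv_{v_2}(A)=-\tfrac1d-\tfrac1\delta,\]
and $\inv_v(A)=0$ elsewhere; the invariants sum to zero so $A$ exists, and the full-order invariant at $v_1$ forces the global index to be $\delta$, so $A$ is a central division algebra of degree $\delta$. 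Local embeddings then exist everywhere: at $v_0$ by the hypothesis $\calE(\lambda,s)\neq\emptyset$, at $v_1,v_2$ because $K_v$ is totally split (so $\calE$ is trivially nonempty), and at the remaining places because $A_v$ is a full matrix algebra. Finally $\bar\bfx_{w_{i_0}}=\bfx_{i_0}(\lambda,s)\bmod\Z\neq 0$, so $\bar\bfx\neq 0$ and Theorem~\ref{12} prohibits a global embedding. The chief obstacle is the construction of $K$: one needs a degree-$k$ separable extension with prescribed decomposition at $v_0$, total splitting at $v_1,v_2$, and irreducible minimal polynomial over $F$. The first two conditions fall out of weak approximation plus Krasner; irreducibility can be ensured by imposing, for example, an Eisenstein condition at an auxiliary fourth place.
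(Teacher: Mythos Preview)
Your argument for part~(1) is correct and matches the paper's: the local data at each $v$ give a pair $(\lambda_v,s_v)\in\LD(k,\delta)$, the hypothesis forces $\bfx(\lambda_v,s_v)$ to be integral, and Theorem~\ref{12} finishes.

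Part~(2), however, contains a genuine error. Your claim that local embeddings exist at $v_1$ and $v_2$ ``because $K_v$ is totally split (so $\calE$ is trivially nonempty)'' is false; in fact total splitting is the \emph{worst} possible local behaviour at a place where $A$ has large index. At $v_1$ you set $\inv_{v_1}(A)=1/\delta$, so $A_{v_1}$ is a division algebra and $\c(A_{v_1})=1$, while $K_{v_1}\cong F_{v_1}^k$. Then each $\ell_w=1$ and $\calE_{v_1}$ asks for $k$ positive integers summing to $1$, which is empty for $k>1$; equivalently, a division algebra has no nontrivial idempotents and so cannot contain $F_{v_1}^k$. At $v_2$ the capacity is $\gcd(s+1,\delta)$ (writing $\delta=ds$), and the same obstruction appears whenever $k>\gcd(s+1,\delta)$.

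The repair is to reverse the local prescription for $K$ at the places where $A$ has full index: make $K$ \emph{inert} there (i.e.\ $K_v$ a field of degree $k$). Then $k_w=k\mid\delta=d_v$ gives $\ell_w=1$ and $\calE_v=\{1\}\neq\emptyset$. The paper implements this cleanly with four finite places $v_1,v_1',v_2,v_2'$ and invariants $1/d,\,-1/d,\,1/\delta,\,-1/\delta$ (so the reciprocity sum is visibly zero and the local indices are exactly $d$ or $\delta$), prescribing $K_v$ to have partition $\lambda$ at $v_1,v_1'$ and to be a field at $v_2,v_2'$; Lemma~\ref{43} supplies such a $K$. Your three-place variant could be salvaged along the same lines, but you must make $K$ inert, not totally split, at every place where $A_v$ fails to have capacity at least $k$.
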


The proof of Theorem~\ref{13} uses Theorem~\ref{12}, the Hilbert
irreducibility theorem and the global class field theory. Based on
Theorem~\ref{13}, we prove the following result (Proposition~\ref{46}
and Corollary~\ref{47}), which also shows
that (Q3) has a negative answer in general.

\begin{prop}\label{14}\
\begin{enumerate}
\item Let $\delta=p_1^{n_1}\cdots p_r^{n_r}$,
and let $k$ be a positive integer with $k>1$ and $k \mid \delta$, where
$p_i$\!\!'s are distinct prime divisors of $\delta$.
Assume that $k \leq \delta/p_i^{n_i}$
and $p_i \mid k$ for some $1\leq i\leq r$ (so $r\ge 2$).
Then the Hasse principle
for the pair $(k,\delta)$ does not hold.
\item    Let $\delta$ be a positive integer divisible by
at least two primes. Then
  there is a positive integer $k$ with $k\mid \delta$ such that the
  Hasse principle for the pair $(k,\delta)$ does not hold.
\end{enumerate}
\end{prop}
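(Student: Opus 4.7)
The plan is to apply Theorem~\ref{13}(2) in both parts: for each case it suffices to exhibit one pair $(\lambda,s)\in\LD(k,\delta)$ whose vector $\bfx(\lambda,s)$ has a non-integral component. The observation guiding the construction is that for the trivial partition $\lambda=(1,1,\dots,1)$ of $k$ into $k$ parts of size $1$, every $\ell_i = 1/\gcd(1,d) = 1$ regardless of $d$, so $\calE(\lambda,s)$ is automatically non-empty for any positive $s$ (take $x_1 = s$ and the remaining $x_i = 0$), and each component of $\bfx(\lambda,s)$ equals $s/k$. The problem thus reduces to producing a positive divisor $s$ of $\delta$ with $k \nmid s$.

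For part (1), after relabeling so that $p_1 \mid k$ and $k \le \delta/p_1^{n_1}$, I would set $s := \delta/p_1^{n_1}$, the $p_1$-free part of $\delta$. Then $s$ is a positive divisor of $\delta$, and with $\lambda=(1,\dots,1)$ we get $(\lambda,s)\in\LD(k,\delta)$. Since $s$ is coprime to $p_1$ while $p_1 \mid k$, the fraction $s/k$ is not integral, so $\bfx(\lambda,s)$ has non-integral entries and Theorem~\ref{13}(2) yields the failure of the Hasse principle for the pair $(k,\delta)$.

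For part (2), I would deduce the statement from (1) by setting $k := p$, where $p$ is the smallest prime divisor of $\delta$. Write $\delta = p^{n_p} m$ with $\gcd(p,m)=1$. The hypothesis that $\delta$ has at least two distinct prime factors forces $m > 1$, and every prime factor of $m$ is a prime of $\delta$ different from $p$, hence strictly greater than $p$ by minimality. Therefore $m \ge q > p = k$ for any prime factor $q$ of $m$, which verifies simultaneously $k > 1$, $k \mid \delta$, $p \mid k$, and $k \le \delta/p^{n_p}$; the hypothesis of (1) applies with $p_i = p$.

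I do not foresee a serious obstacle: the substantive content has already been packaged into Theorems~\ref{12} and \ref{13}, and what remains is the combinatorial bookkeeping above. The only slightly delicate point is choosing the ``right'' $(\lambda,s)$, and the trivial partition together with $s = \delta/p_1^{n_1}$ is particularly clean because it makes the $\ell_i$ independent of $d = \delta/s$ and completely sidesteps any case analysis on the interaction between $k$ and $d$.
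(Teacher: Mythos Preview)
Your overall strategy is exactly the paper's: use the trivial partition $\lambda=(1^{\oplus k})$ together with $s=\delta/p_1^{n_1}$ to produce $(\lambda,s)\in\LD(k,\delta)$ with $\bfx_i=s/k\notin\bbN$, then invoke Theorem~\ref{13}(2); and part~(2) is deduced from part~(1) by taking $k$ to be the smallest prime divisor of $\delta$, just as in the paper.

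There is, however, one slip. In this paper $\bbN$ denotes the \emph{positive} integers: the elements of $\calE(\lambda,s)$ arise from decompositions $V=\bigoplus V_i$ into \emph{non-zero} summands (see the discussion preceding Lemma~\ref{24} and the formula~(\ref{eq:37})), so every $x_i\ge 1$. Your claim that $\calE(\lambda,s)$ is non-empty ``for any positive $s$'' by taking $x_1=s$ and the remaining $x_i=0$ is therefore not valid. With $\ell_i=1$ for all $i$, the constraint $\sum_{i=1}^k x_i=s$ with $x_i\ge 1$ forces $s\ge k$, and this is precisely where the hypothesis $k\le \delta/p_i^{n_i}=s$ enters the argument (as the paper makes explicit). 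Once you invoke that hypothesis to guarantee non-emptiness, the rest of your proof goes through and coincides with the paper's.
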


Clearly Proposition~\ref{14} (2) follows from Proposition~\ref{14} (1).
Proposition~\ref{14} tells us that when $\delta$ is divisible by two
primes and $k$ is ``small'' comparable to $\delta$,
the Hasse principle for $(\delta,k)$ fails.

In the last section we discuss the Hasse principle for a geometric
orbit of the variety of embeddings of $K$ into $A$ that is defined
over $F$, where $K$ is a {\it finite etale commutative $F$-algebra}. 
Let $X$ be the $F$-scheme of embeddings of $K$ into $A$ (see
Section~\ref{sec:07}). The group $H=\GL_1(A)$ acts on $X$ by
conjugation. We have the following result.

\begin{thm}[The Hasse principle]\label{110}
Let $\ol X_0$ be a geometric orbit of $H\otimes_F F^\sep$ that is
defined over $F$ and let $X_0$ be the $F$-subscheme of $X$ whose base
extension to $F^\sep$ is $\ol X_0$.  If
$X_0(F_v)\neq \emptyset$ for all $v\in V^F$, then $X_0(F)\neq \emptyset$.
\end{thm}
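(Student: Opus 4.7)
The approach is to realize $X_0$ as the homogeneous space $H/G$ for a suitably descended $F$-form $G$ of the geometric stabilizer, whereupon the existence of an $F$-point becomes automatic from the quotient structure.

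First I would fix a geometric base point $\bar x \in X_0(F^{\sep})$ and describe the geometric stabilizer. By Skolem--Noether, $\Stab_{H_{F^{\sep}}}(\bar x) = \GL_1(\bar C)$, where $\bar C := Z_{\bar A}(\bar x(\bar K))$ is the centralizer of the image inside $\bar A = A \otimes_F F^{\sep}$. Since $\bar K = K \otimes_F F^{\sep}$ is split \'etale, $\bar C$ is a product of matrix algebras over $F^{\sep}$ whose block sizes match the geometric invariants of $\bar X_0$. Call this geometric stabilizer $\bar G$; then $X_0 \cong H/\bar G$ as $F^{\sep}$-schemes.

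The crucial step is to descend $\bar G$ to an $F$-form $G$. For each place $v$, the hypothesis gives $y_v \in X_0(F_v)$, and its $F_v$-stabilizer $G_{(v)} := \GL_1(Z_{A_v}(y_v(K_v)))$ is the multiplicative group of a semisimple $K_v$-algebra with center $y_v(K_v) \cong K_v$. By the double centralizer theorem, the Brauer class of $Z_{A_v}(y_v(K_v))$ in $\Br(K_w)$ for each $w\mid v$ equals (up to matrix factors) the local class of $A\otimes_F K$ at $w$. Hence the local Brauer data automatically satisfy the Hasse--Brauer--Noether sum-zero condition and assemble into a global semisimple $K$-algebra $C$. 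Setting $G := \GL_1(C)$ yields an $F$-form of $\bar G$ for which the geometric identification $X_0 \cong H/\bar G$ descends to an $F$-isomorphism $X_0 \cong H/G$ matching the local identifications at each place.

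With $X_0 \cong H/G$ as $F$-schemes, the image of the identity of $H$ under the quotient $H \twoheadrightarrow H/G = X_0$ is a canonical $F$-rational point of $X_0$. As a consistency check, the exact sequence of pointed sets $H(F) \to X_0(F) \to H^1(F, G) \to H^1(F, H)$, together with $H^1(F, G) = H^1(F, H) = 1$ (generalized Hilbert 90, via Shapiro's lemma applied to Weil restrictions), moreover shows that $H(F)$ acts transitively on $X_0(F)$.

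The main technical obstacle is verifying that the locally constructed $F_v$-forms $G_{(v)}$ glue to a global $F$-form $G$ for which $X_0 \cong H/G$ holds as $F$-schemes, not merely after each completion. The Brauer-class side of the patching is handled by Hasse--Brauer--Noether and the double centralizer theorem as outlined above; the $F$-rationality of the resulting isomorphism $X_0 \cong H/G$ follows from the assumption that $\bar X_0$ itself is defined over $F$, ensuring that all descent data are Galois-equivariant.
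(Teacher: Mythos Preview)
There is a genuine gap, and it sits precisely at the two steps you flag as ``the main technical obstacle''.

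First, the construction of $C$. You invoke the \emph{sum-zero} part of Hasse--Brauer--Noether to glue the local Brauer classes of the centralizers $Z_{A_v}(y_v(K_v))$ into a global class. But that global class already exists for free: by the double-centralizer theorem it is $[A\otimes_F K_i]\in\Br(K_i)$, so no reciprocity is needed to produce it. What \emph{is} needed---and what you never check---is that this class admits a representative of degree exactly $m_i$ over $K_i$, i.e.\ that $\mathrm{ind}_{K_i}(A\otimes_F K_i)\mid m_i$. Without this, the algebra $C$ you write down is not an $F$-form of $\bar C=\prod_i\Mat_{m_i}(K_i\otimes F^{\sep})$ at all, and $G:=\GL_1(C)$ is not an $F$-form of $\bar G$. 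The local hypothesis does give $\mathrm{ind}_{K_{i,w}}(A\otimes K_{i,w})\mid m_i$ for every place $w$ of $K_i$; passing from this to the global index divisibility requires the \emph{index} consequence of Hasse--Brauer--Noether ($\mathrm{ind}=\lcm$ of local indices), not the sum-zero condition you cite.

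Second, even granting that $G$ exists as an $F$-form of $\bar G$ with the correct kernel $\kappa$, the assertion ``the geometric identification $X_0\cong H/\bar G$ descends to an $F$-isomorphism $X_0\cong H/G$'' is exactly the content of the theorem, not a consequence of $\bar X_0$ being defined over $F$. There can be many $F$-homogeneous spaces with the same geometric orbit $\bar X_0$ and the same $F$-kernel; they are parametrized by Springer's set $H^2(F,L)$, and only the \emph{neutral} classes arise as $H/G'$ for some $F$-form $G'$. Your sentence ``the $F$-rationality of the resulting isomorphism follows from the assumption that $\bar X_0$ itself is defined over $F$'' does not address which class $\eta(X_0)\in H^2(F,L)$ the space $X_0$ represents. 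The paper closes this gap by using Borovoi's criterion: with the quasi-split base point $G_0=\prod_i\Res_{K_i/F}\GL_{m_i}$, the class $\eta(X_0)$ corresponds to $([A_1],\dots,[A_r])\in\prod_i\Br(K_i)$, and neutrality is equivalent to $\mathrm{ind}_{K_i}(A_i)\mid m_i$. Thus the missing index step in your first paragraph and the missing neutrality step in your second are the same step, and both are bypassed rather than proved in your outline.
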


The proof relies on results of Springer \cite{springer:h2}, Douai  and
Borovoi \cite{borovoi:duke1993} about the second
non-abelian Galois cohomology and neutral classes. 
% UsingTheorem~\ref{110}, we give an alternative proof of Theorem~\ref{19}.
Among early results in this area one can mention the
validity of the Hasse principle for projective homogeneous spaces
due to Harder \cite{harder:hp68} and for symmetric homogeneous spaces of
absolutely simple simply-connected groups due to Rapinchuk
\cite{rapinchuk:hp87}. Later, Borovoi in a series of papers developed
cohomological methods for analyzing the Hasse principle for
homogeneous spaces with connected stabilizers, of an arbitrary
connected group whose maximal semi-simple subgroup is
simply-connected; see Borovoi \cite{borovoi:crelle1996,
  borovoi:mathann1999}. In \cite[Appendix A]{prasad-rapinchuk:embed10}
Prasad and Rapinchuk used the cohomological method to study the
local-global principle for embeddings of maximal subfields into central
simple algebras over global fields {\it with involutions}. \\

% We hope this paper may provide some useful information to the geometric
% side of the trace formula mentioned above. We also remark that the
% local-global principle for embeddings of maximal subfields into central
% simple algebras over global fields {\it with involutions}, which is more
% involved, is studied in Prasad and Rapinchuk
% \cite{prasad-rapinchuk:embed10}. \\

This paper is organized as follows. In Section~\ref{sec:02} we collect
and show some general embedding results over any field
based on \cite{yu:embed}. In Section~\ref{sec:03},
we give a more detailed study about embeddings of a
field extension $K$ into a central simple algebra $A$ over a global
field.
We determine for which pair $(K,A)$ the Hasse principle holds. In
particular, we answer the questions (Q1), (Q2) and (Q3).
In Section~\ref{sec:04}, % and \ref{sec:05},
we work on the Hasse principle for a given
pair $(k,\delta)$ of positive integers with $k$ dividing $\delta$.
% In Section~\ref{sec:06}, we give the proof of Theorem~\ref{19}. 
In Section~\ref{sec:07} we give the proof of Theorem~\ref{110} 
following a referee's suggestion.

% For (Q3) we show a easily checkable criternion for checking which pair
% a family of pairs  $(K, A)$ the Hasse principle holds,

% this is Theorem 4.5.

% Consider the Haase principle for $(k,\delta)$, using the existen of
% a field with prescribed lcoal conditions and the global class field
% theory, we show a results easily checkable criternion for ching which
% pair the Hasse priciple holds. Then the remaing of this paper to
% determine the Hasse principle prinple  for certain form of pairs
% (k,d).

\section{General embedding results}
\label{sec:02}

\subsection{Setting}
\label{sec:21}

Let $F$ denote the ground field, which is arbitrary in this section.
All $F$-algebras considered in this paper are assumed to be
finite-dimensional as $F$-vector spaces and have the identity.
As the standard convention,
an $F$-algebra homomorphism is a ring homomorphism over $F$; in
particular, it sends the identity of the source to the identity of
the target.

We recall the following definition for central simple algebras; see
\cite{reiner:mo}.

\begin{defn}\label{21}
The {\it degree}, {\it capacity}, and {\it index} of a
central simple algebra $A$ over $F$ are defined as
\[ \deg(A):=\sqrt{[A:F]},\quad \c(A):=n,\quad \i(A):=\sqrt{[\Delta:F]}, \]
if $A\cong \Mat_n(\Delta)$, where $\Delta$ is a division algebra over
$F$, which is uniquely determined by $A$ up to isomorphism.
The algebra $\Delta$ is also called the {\it division part} of $A$.
\end{defn}

For the convenience of discussion, we introduce the following
definition and notations. %(cf. \cite[Definition 2.1]{yu:embed}).

\begin{defn}\label{22} \

\begin{enumerate}
  \item Let $V$ be a finite-dimensional vector space
  over $F$, and
  $A$ a finite-dimensional arbitrary $F$-algebra. We say that {\it
  $V$ admits an $A$-module structure}
  if there is a right (or left) $A$-module structure on $V$.
  If $B$ is any $F$-subalgebra of $A$ and $V$ is already a right
  (resp. left)
  $B$-module, then by saying $V$ admits an $A$-module structure
  we mean that the right
  (resp. left) $A$-module structure on $V$ is required to be
   compatible with the underlying $B$-module structure on $V$.

 \item  For any two
$F$-algebras $A_1, A_2$, let $\Hom_F(A_1,A_2)$ denote the set of
$F$-algebra homomorphisms from $A_1$ to $A_2$, and let
\[ \Hom_F^*(A_1,A_2)\subset \Hom_F(A_1, A_2) \]
be the subset consisting of
embeddings of $A_1$ into $A_2$.
For two maps $\varphi_1, \varphi_2\in \Hom_F(A_1,A_2)$,
we say $\varphi_1$ and $\varphi_2$ are {\it
  equivalent} if there is an element $b\in A_2^\times$ such
  that $\varphi_2=\Int(b)\circ \varphi_1$. That is,
  $\varphi_2(a)=b\, \varphi_1(a) \,b^{-1}$ for all $a\in A_1$. Then
$A_2^\times \backslash \Hom_F(A_1, A_2)$ is the set of equivalence
classes of $F$-algebra homomorphisms from $A_1$ to $A_2$. Write
\[ \calO_{A_1,A_2}:=A_2^\times \backslash \Hom_F(A_1, A_2)\quad
\text{and} \quad \calO^*_{A_1,A_2}:= A_2^\times \backslash \Hom^*_F(A_1,
A_2) \]
for the orbits spaces.

We often simply write $A_1\otimes A_2$ for $A_1\otimes_F A_2$ if the
ground field $F$ is understood.

\end{enumerate}

\end{defn}

\subsection{General embedding lemmas}
\label{sec:22}
Let $A$ be a central simple algebra over $F$. We realize
$A$ as $\End_\Delta(V)$, the endomorphism algebra of $V$,
where $\Delta$ is the division part of $A$ and $V$
is a right $\Delta$-module of finite rank.
Let $A'$ be another simple $F$-algebra with center $K$.
% whose center not necessarily equal to $F$, with center $K$.
Then there is an $F$-algebra
homomorphism $\varphi:A'\to A$ if and only if there is an
$(A',\Delta)$-bimodule structure on $V$, or $V$ admits a right
$\Delta\otimes_F {A'}^{o}$-module structure.
Here ${A'}^{o}$ denotes the opposite algebra of $A'$.
Suppose $n:=\dim_\Delta V$ and
\[ \Delta\otimes_F {A'}^{o}\simeq (\Delta\otimes_F K)\otimes_K
{A'}^{o}\simeq \Mat_c(\Delta'),\]
where $\Delta'$ is the division part of the central simple algebra
$\Delta\otimes_F {A'}^{o}$ over $K$.
We have
\begin{equation}
  \label{eq:21}
  [\Delta:F][A':F]=c^2 [\Delta':F] \quad\text{and}\quad \dim_F V=n
  [\Delta:F].
\end{equation}
Then $V$ admits a right $\Delta\otimes_F {A'}^{o}$-module structure
if and only if
\begin{equation}
  \label{eq:22}
  c [\Delta':F]\, |\, \dim_F V.
\end{equation}
This is equivalent to $[\Delta:F][A':F]\,|\, c n [\Delta:F]$, or
equivalently
\begin{equation}
  \label{eq:23}
  [A':F]\,|\, n c.
\end{equation}
We have proved the following result.
\begin{lemma}\label{23}
  Let $A$ be a central simple $F$-algebra and $A'$ a simple
  $F$-algebra. Suppose $A\simeq \Mat_n(\Delta)$.
  Then there exists an $F$-embedding of $A'$ into $A$ if and only if
  \begin{equation}
    \label{eq:24}
    [A':F]\,|\, n\cdot \c(\Delta\otimes_F {A'}^{o}).
  \end{equation}
\end{lemma}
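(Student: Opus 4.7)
The plan is to view an embedding $A' \hookrightarrow A$ as a compatible module structure on the underlying space of $A$ and then to count $F$-dimensions using Wedderburn's theorem.

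First I would realize $A$ as $\End_\Delta(V)$, where $V$ is a right $\Delta$-module of rank $n$, so $\dim_F V = n[\Delta:F]$. Giving an $F$-algebra homomorphism $\varphi : A' \to A$ is then the same as giving a left $A'$-action on $V$ that commutes with the right $\Delta$-action, i.e., endowing $V$ with a right $\Delta \otimes_F {A'}^{o}$-module structure whose restriction along $\Delta \to \Delta \otimes_F {A'}^{o}$, $d \mapsto d \otimes 1$, coincides with the given $\Delta$-action. Since $A'$ is simple and $\varphi(1)=1\neq 0$, the kernel of $\varphi$ is a proper two-sided ideal and hence zero, so every $F$-algebra homomorphism $A' \to A$ is automatically an embedding.

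Next I would invoke Wedderburn: write $\Delta \otimes_F {A'}^{o} \simeq \Mat_c(\Delta')$ with $c = \c(\Delta \otimes_F {A'}^{o})$ and $\Delta'$ the division part. The (up to isomorphism) unique simple right module $M_0$ over $\Mat_c(\Delta')$ has $F$-dimension $c[\Delta':F]$, and every right $\Mat_c(\Delta')$-module is a direct sum of copies of $M_0$. Hence $V$ admits such a module structure if and only if $c[\Delta':F] \mid \dim_F V = n[\Delta:F]$. Combined with the identity $[\Delta:F][A':F] = c^2 [\Delta':F]$ already noted in the excerpt, this divisibility is equivalent to $[A':F] \mid nc$, which is exactly the stated criterion.

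The one subtlety I would make sure to verify is that the numerical condition $c[\Delta':F] \mid n[\Delta:F]$ suffices to \emph{extend} the given $\Delta$-structure on $V$, not merely to produce some $F$-vector space of the right dimension carrying a $\Delta\otimes_F{A'}^{o}$-action. This reduces to the fact that $\Delta$ is a division algebra, so finitely generated right $\Delta$-modules are classified by their rank: the restriction of $M_0^m$ to $\Delta$ is $\Delta$-free of rank $mc[\Delta':F]/[\Delta:F]$, which equals $n$ for a suitable positive integer $m$ as soon as the divisibility holds, and a $\Delta$-linear isomorphism $V \simeq M_0^m$ transports the $\Mat_c(\Delta')$-action onto $V$ itself. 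I do not expect any deeper obstacle beyond this bookkeeping.
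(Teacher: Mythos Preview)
Your argument is correct and follows essentially the same route as the paper: realize $A=\End_\Delta(V)$, translate an embedding $A'\hookrightarrow A$ into a right $\Delta\otimes_F{A'}^{o}$-module structure on $V$, apply Wedderburn to reduce to the divisibility $c[\Delta':F]\mid n[\Delta:F]$, and then use the dimension identity to rewrite this as $[A':F]\mid nc$. The only difference is cosmetic: you spell out explicitly why the numerical condition lets one extend the \emph{given} $\Delta$-structure (via classification of $\Delta$-modules by rank), whereas the paper folds this into its Definition~\ref{22}(1) and leaves the verification implicit.
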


Next we consider the case where $A'$ is a {\it semi-simple}
$F$-algebra instead.
Write $A'=\prod_{i=1}^s A'_i$ into simple factors and let $K_i$ be the
center of $A'_i$.
The existence of an
$F$-embedding of $A'$ into $A$ is equivalent to that
there is an $(A',
\Delta)$-bimodule structure on $V$. This means that there is a
decomposition of $V$ into  $\Delta$-submodules
\begin{equation}
  \label{eq:25}
  V=V_1\oplus \dots \oplus V_s
\end{equation}
so that each $V_i$ is a {\it non-zero}  $(A_i', \Delta)$-bimodule. Put
$n_i:=\dim_\Delta V_i$ and let $c_i$ be the capacity of the central
simple algebra
\[ \Delta\otimes_F {A'_i}^{o}= (\Delta\otimes_F K_i) \otimes_{K_i}
{A'_i}^{o} \]
over $K_i$. Then we get the conditions
\begin{equation}
  \label{eq:26}
  n=\sum_{i=1}^s n_i \quad \text{and} \quad [A_i':F]\,|\,n_i c_i, \quad
  \forall\, i=1,\dots, s.
\end{equation}

This yields the following criterion for embeddings.

\begin{lemma}\label{24}
  Let $A$ and $A'=\prod_{i=1}^s A'_i$ be as above. Then there is an
  embedding of the $F$-algebra $A'$ into $A$ if and only if there
  are positive integers $n_i$ for $i=1,\dots, s$ such that
\begin{equation*}
  n=\sum_{i=1}^s n_i \quad \text{and}
  \quad [A_i':F]\,|\, n_i c_i, \quad
  \forall\, i=1,\dots, s,
\end{equation*}
where $c_i$ the capacity of the central
simple algebra $\Delta\otimes_F {A'_i}^{o}$ over $K_i$.
\end{lemma}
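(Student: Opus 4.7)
The plan is to reduce the semi-simple case to the simple case already handled in Lemma~\ref{23} by decomposing the module $V$ according to the central idempotents of $A'$. Concretely, I would write $A=\End_\Delta(V)$ for a right $\Delta$-module $V$ of rank $n$, and observe that giving an $F$-algebra homomorphism $\varphi:A'\to A$ is equivalent to equipping $V$ with an $(A',\Delta)$-bimodule structure that is compatible with its right $\Delta$-module structure, exactly as in the proof of Lemma~\ref{23}. The homomorphism $\varphi$ is an embedding precisely when this bimodule structure is faithful on $A'$.

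Next I would use the decomposition $A'=\prod_{i=1}^s A'_i$ into simple factors. The central primitive idempotents $e_1,\dots,e_s\in A'$ are $\Delta$-linear endomorphisms of $V$ once $\varphi$ is fixed, and they commute with each other while summing to the identity. Hence setting $V_i:=e_iV$ yields a decomposition $V=V_1\oplus\cdots\oplus V_s$ into right $\Delta$-submodules, and each $V_i$ inherits the structure of an $(A'_i,\Delta)$-bimodule. Conversely, any family of $(A'_i,\Delta)$-bimodule structures on a decomposition of $V$ assembles into an $(A',\Delta)$-bimodule structure. Since the kernel of a homomorphism out of $\prod A'_i$ is a product of some of the simple factors, $\varphi$ is injective if and only if $e_iV\neq 0$, i.e.\ $n_i:=\dim_\Delta V_i>0$, for every $i$.

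Applying Lemma~\ref{23} to each simple factor $A'_i$ acting on $V_i$, a non-trivial $(A'_i,\Delta)$-bimodule structure on the $\Delta$-module $V_i$ of rank $n_i$ exists if and only if $[A'_i:F]\mid n_i c_i$, where $c_i$ is the capacity of the central simple $K_i$-algebra $\Delta\otimes_F {A'_i}^{o}$. Combining this with the obvious constraint $n=\sum_i n_i$ yields both directions of the claimed equivalence: the ``only if'' direction by reading off the ranks $n_i$ from the decomposition induced by $\varphi$, and the ``if'' direction by using Lemma~\ref{23} to produce each $(A'_i,\Delta)$-bimodule $V_i$ of the specified rank and taking their direct sum as the new $(A',\Delta)$-bimodule structure on $V$.

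The routine checks are the compatibility of the idempotent decomposition with the $\Delta$-action (immediate because $\Delta$ commutes with $\varphi(A')$ inside $A$) and the matching of dimensions ensuring $\dim_\Delta(\bigoplus V_i)=n$. The main subtlety is not a computation but rather the faithfulness condition: one must insist $n_i\ge 1$ for all $i$ to distinguish embeddings from general homomorphisms, and I would emphasize this in the argument since Lemma~\ref{23} alone would allow $n_i=0$.
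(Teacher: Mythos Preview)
Your proposal is correct and follows essentially the same route as the paper: the paper also translates an embedding $A'\hookrightarrow A=\End_\Delta(V)$ into an $(A',\Delta)$-bimodule structure on $V$, decomposes $V=V_1\oplus\cdots\oplus V_s$ along the simple factors with each $V_i$ a \emph{non-zero} $(A'_i,\Delta)$-bimodule, and then reads off the conditions $n=\sum n_i$ and $[A'_i:F]\mid n_i c_i$ from the simple case (Lemma~\ref{23}). Your extra remarks about the central idempotents and the faithfulness condition forcing $n_i\ge 1$ make explicit exactly the points the paper leaves implicit.
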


\begin{lemma}\label{25}
  Let $A$ and $A'=\prod_{i=1}^s A'_i$ be as above. Let $\varphi,
  \varphi'$ be two maps in $\Hom_{F}(A',A)$, and let
  $V_\varphi$ and $V_{\varphi'}$ be the associated
  $(A',\Delta)$-bimodules
  underlying the space $V$. Then $\varphi$ and $\varphi'$ are
  equivalent if and only if the $(A',\Delta)$-bimodules $V_\varphi$
  and $V_{\varphi'}$ are isomorphic.
\end{lemma}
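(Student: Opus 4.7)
The plan is to use the identification $A=\End_\Delta(V)$, under which $A^\times$ becomes the group $\Aut_\Delta(V)$ of right-$\Delta$-linear automorphisms of $V$. An $F$-algebra homomorphism $\varphi\colon A'\to A=\End_\Delta(V)$ is exactly the data of a left $A'$-action on $V$ commuting with the right $\Delta$-action, i.e.\ the bimodule $V_\varphi$. So the statement becomes: two such $A'$-actions are related by an element of $\Aut_\Delta(V)$ acting by transport of structure if and only if the resulting bimodules are isomorphic. The rest is a matter of unwinding definitions on each side of the biconditional.

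For the forward direction, suppose $\varphi'=\Int(b)\circ\varphi$ with $b\in A^\times$. Then $b\in\Aut_\Delta(V)$, and the identity $\varphi'(a')=b\,\varphi(a')\,b^{-1}$ can be rewritten as $\varphi'(a')\circ b=b\circ\varphi(a')$ in $\End_\Delta(V)$. This is precisely the statement that $b\colon V_\varphi\to V_{\varphi'}$ is left $A'$-linear; combined with the right $\Delta$-linearity, which is automatic, it gives a bimodule isomorphism $V_\varphi\cong V_{\varphi'}$.

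For the converse, let $f\colon V_\varphi\to V_{\varphi'}$ be an $(A',\Delta)$-bimodule isomorphism. Since $f$ is a right-$\Delta$-linear automorphism of $V$, it lies in $\Aut_\Delta(V)=A^\times$. The $A'$-intertwining condition $f(\varphi(a')v)=\varphi'(a')f(v)$ for all $a'\in A'$ and $v\in V$ says exactly that $\varphi'(a')=f\circ\varphi(a')\circ f^{-1}$ in $\End_\Delta(V)$, i.e.\ $\varphi'=\Int(f)\circ\varphi$, so the two homomorphisms are equivalent. There is no substantial obstacle here: the argument is a tautological translation once one records that homomorphisms $A'\to\End_\Delta(V)$ and compatible left $A'$-module structures on the right $\Delta$-module $V$ are the same data, and that conjugation by $b\in A^\times$ corresponds to transport along $b$. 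In particular, the product decomposition $A'=\prod_{i=1}^{s}A_i'$ plays no role; the same proof applies to any $F$-algebra $A'$.
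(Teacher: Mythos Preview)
Your proof is correct: the argument is the standard tautological translation between conjugation in $A^\times=\Aut_\Delta(V)$ and bimodule isomorphism, and you have carried it out cleanly in both directions. The paper itself does not give a proof here but simply cites \cite[Lemma~3.2]{yu:embed}; your argument is exactly the direct verification one expects that reference to contain, and your observation that the decomposition $A'=\prod_i A'_i$ is irrelevant to this lemma is also correct.
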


\begin{proof}
  See \cite[Lemma 3.2]{yu:embed}
\end{proof}

\subsection{Maximal degree field extension case}
\label{sec:23}
We apply Lemmas~\ref{23} and \ref{24} to the case where the
semi-simple algebra $A'$ is commutative and obtain the following
well-known result.
This is also a consequence of a result of Chuard-Koulmann and Morales
\cite[Proposition 4.3]{chuard-koulmann-morales}.

% \cite[Proposition 2.6]{prasad-rapinchuk:embed10}.

\begin{lemma}\label{26} \
\begin{enumerate}
\item Let $A$ be a central simple algebra over $F$ and $K$ is a field
  extension of $F$ with $[K:F]=\deg(A)$. Then there exists an
  $F$-embedding of $K$ into $A$ if and only if $K$ splits $A$.
\item Let $A$ be a central simple algebra over $F$ and
  $K=\prod_{i=1}^s  K_i$ is commutative semi-simple
  $F$-algebra with $[K:F]=\deg(A)$. Then there exists an
  $F$-embedding of $K$ into $A$ if and only if each $K_i$ splits $A$.
\end{enumerate}
\end{lemma}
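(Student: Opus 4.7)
The plan is to deduce both parts directly from the criteria already established in Lemmas~\ref{23} and \ref{24}, using two simple facts about central simple algebras: (a) $c(\Delta\otimes_F K)\le \deg(\Delta)$ for any field extension $K/F$, and (b) if $K/F$ is a field extension that splits $\Delta$, then $\deg(\Delta)\mid [K:F]$.

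For part (1), write $A\simeq\Mat_n(\Delta)$ with $d:=\deg(\Delta)$, so $[K:F]=\deg(A)=nd$. Applying Lemma~\ref{23} to the simple $F$-algebra $A'=K$ (so ${A'}^o=K$), an embedding exists iff $nd\mid n\cdot c(\Delta\otimes_F K)$, i.e.\ iff $d\mid c(\Delta\otimes_F K)$. Since $\Delta\otimes_F K$ is central simple over $K$ of degree $d$, its capacity is at most $d$, with equality precisely when $\Delta\otimes_F K$ is split, i.e.\ when $K$ splits $\Delta$ (equivalently, $K$ splits $A$). Thus the divisibility forces $c(\Delta\otimes_F K)=d$, giving the claim.

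For part (2), let $k_i:=[K_i:F]$, $c_i:=c(\Delta\otimes_F K_i)$, and note $\sum_i k_i=nd$. By Lemma~\ref{24}, an $F$-embedding of $K=\prod K_i$ into $A$ exists iff there are positive integers $n_i$ with $\sum n_i=n$ and $k_i\mid n_i c_i$ for all $i$. The ``if'' direction is clean: if each $K_i$ splits $A$, then $c_i=d$ (by the argument in (1)) and $d\mid k_i$ (by fact (b)), so setting $n_i:=k_i/d$ yields $\sum n_i=n$ and $k_i\mid n_i c_i$. For the ``only if'' direction, the hypothesis $k_i\mid n_i c_i$ gives $n_i\ge k_i/c_i\ge k_i/d$, so
\[
n=\sum_{i=1}^s n_i \;\ge\; \sum_{i=1}^s \frac{k_i}{d}\;=\;\frac{nd}{d}\;=\;n,
\]
forcing equality throughout; in particular $c_i=d$ for every $i$, which means each $K_i$ splits $A$.

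The only real input beyond Lemmas~\ref{23} and \ref{24} is fact (b) about splitting fields, which I would either cite from a standard reference (e.g.\ Pierce or Reiner) or sketch via the usual argument: if $K$ splits $\Delta$ then $\Delta$ acts $F$-linearly on a $K$-vector space of dimension $d$, giving an $F$-algebra embedding $\Delta\hookrightarrow\End_F(K^d)$ whose dimensions force $d\mid [K:F]$. No step looks like a genuine obstacle; the content is entirely in the counting inequality above.
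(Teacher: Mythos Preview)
Your proof is correct and follows essentially the same approach as the paper's: both deduce part (1) from Lemma~\ref{23} via the observation that $c(\Delta\otimes_F K)\le \deg(\Delta)$ with equality iff $K$ splits $\Delta$, and both handle part (2) by the counting inequality $\sum n_i\ge \sum k_i/d=n$ forced to equality. The only cosmetic difference in the ``only if'' direction of (2) is that the paper uses $k_i\mid n_i\deg(\Delta)$ (degree of a subfield divides the degree of the ambient central simple algebra) to get $n_i\ge k_i/d$ and then invokes part (1) after equality, whereas you work directly with $c_i$ via $n_i\ge k_i/c_i\ge k_i/d$ and read off $c_i=d$; both routes need fact (b) in the ``if'' direction, and the paper likewise asserts $[K_i:F]=m_i\deg(\Delta)$ without further comment.
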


\begin{proof}
  (1) By Lemma~\ref{23}, the set $\Hom_F(K,A)$ is non-empty if and
  only if $[K:F]\,|\, nc$, where $c:=\c(\Delta\otimes K)$. If $K$
  splits $\Delta$, then $c=\deg(\Delta)$ and hence
  $[K:F]=n\deg(\Delta)=nc$. Therefore, $\Hom_F(K,A)$ is non-empty.
  Suppose $[K:F]|nc$. Then $\deg(\Delta)|c$
  and $\deg(\Delta)=c$. This shows that $K$ splits $\Delta$.

 (2) Suppose there is an $F$-embedding of $K$ into
  $A$. Then there are positive integers $n_i$ with $n=\sum n_i$ and
  there is an embedding of $K_i$ into $\Mat_{n_i}(\Delta)$. Since
  $[K_i:~F]\,|\, n_i \deg(\Delta)$, it follows from
   \[ [K:F]=\deg (A) = \sum_i n_i \deg(\Delta) \ge \sum_i
  [K_i:F]=[K:F] \]
% \quad \deg(A)=\sum_i n_i
% \deg(\Delta),\quad\text{and} \quad [K_i:F]|n_i \deg(\Delta)
that $[K_i:F]=n_i \deg(\Delta)$ for each $i=1,\dots,s$. Therefore,
$K_i$ splits $\Delta$. Conversely, if
$K_i$ splits $\Delta$ for each $i$, then $[K_i:F]=m_i \deg (\Delta)$
for a positive integer $m_i$ and $c_i:=\c(\Delta\otimes_F
K_i)=\deg(\Delta)$. Then we have
\[ n=\sum_i m_i, \quad \text{and}\quad [K_i:F]\, |\, m_i c_i, \quad
\forall\, i=1,\dots, s. \]
It follows from Lemma~\ref{24} that there is an embedding of $K$
into $A$. \qed
\end{proof}

\section{Answers to (Q1) and (Q2) by numerical invariants}
\label{sec:03}

In this section we study $F$-embeddings of $K$ into $A$ over $F$,
 where $F$ is either a local or global field,
 $K$ is a commutative semi-simple algebra over $F$ and $A$ is a
 central simple algebra.

%  Let $F$ denote a local field in Subsection~\ref{sec:31}, and
%  be a global field from Subsection~\ref{sec:32} on to the end of this
%  article. We let $A$ denote a central
%  simple algebra over $F$ and $K$ a commutative semi-simple algebra over
%  $F$, and study $F$-embeddings of $K$ into $A$ over $F$.

% Let
% $\Hom_F(K,A)$ be the set of $F$-algebra homomorphisms from $K$ to $A$
% and $\Hom_F^*(K,A)$ be the subset consisting of embeddings. For any
% two $F$-algebras $A$ and $B$, we often write
% $A\otimes B$ for $A\otimes_F B$.

% In e
% From now on the ground field $F$ is either a local field or a
% global field. We consider the embedding problem for a finite extension
% of $F$ in a central simple simple algebra.

\subsection{Local results}
\label{sec:31}

Let $F$ denote a local field.

\begin{lemma}\label{31}
  Let $A=\End_\Delta(V)=\Mat_n(\Delta)$ be a central simple
  algebra over $F$.
\begin{enumerate}
\item Let $K$ be a finite field extension of $F$. The following
  statement are equivalent:
  \begin{itemize}
  \item [(a)] There exists an
  embedding of $K$ into $A$ over $F$.
  \item [(b)] $[K:F]\,|\, n\cdot \c(\Delta\otimes_F K)$.
  \item [(c)] $[K:F]\,|\, n \deg(\Delta)$.
  \end{itemize}
\item Let $K=\prod_{i=1}^s K_i$ be a commutative semi-simple algebra
  over $F$. Then there exists an embedding of   $K$ into $A$ over $F$
  if and
  only if there are positive integers $n_i$ for $i=1,\dots, s$ such
  that
  \begin{equation}
    \label{eq:31}
    n=\sum_{i=1}^s n_i \quad \text{and} \quad [K_i:F]\,|\, n_i \deg
(\Delta), \quad \forall\, i=1,\dots, s.
  \end{equation}
\end{enumerate}
\end{lemma}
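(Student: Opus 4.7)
The plan is to reduce both parts of the lemma to the general embedding criteria of Section~\ref{sec:22}, combined with one local-field-specific input, namely the capacity formula
\[ \c(\Delta\otimes_F K)=\gcd\bigl([K:F],\,\deg(\Delta)\bigr) \]
valid when $F$ is local. For part (1), the equivalence (a)$\Leftrightarrow$(b) is immediate from Lemma~\ref{23} applied with $A'=K$ (since $K$ is commutative, $K^\opp=K$). The real content is (b)$\Leftrightarrow$(c), which falls out of the capacity formula together with a short arithmetic manipulation.

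To establish the capacity formula, I would invoke the standard structure theory of central simple algebras over a local field: we have $\Br(F)\cong\Q/\Z$ (with the archimedean cases handled separately by inspection), and for any class in $\Br(F)$ the index equals the exponent. Writing $d=\deg(\Delta)$ and $[\Delta]=a/d\in\Q/\Z$ with $\gcd(a,d)=1$, the restriction map $\Br(F)\to\Br(K)$ sends this class to $[K:F]\cdot a/d$, whose order is $d/\gcd([K:F],d)$. This is simultaneously the exponent and the index of $\Delta\otimes_F K$; since $\deg(\Delta\otimes_F K)=d$, dividing yields the asserted value $\gcd([K:F],d)$ for the capacity. The arithmetic step (b)$\Leftrightarrow$(c) is then a one-line exercise: setting $g=\gcd([K:F],d)$ and writing $[K:F]=ga$, $d=gb$ with $\gcd(a,b)=1$, the divisibility $ga\mid ngb$ is equivalent to $a\mid nb$, hence (since $\gcd(a,b)=1$) to $a\mid n$, hence to $ga\mid ng$. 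This handles both implications at once.

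For part (2), Lemma~\ref{24} already asserts that an embedding exists if and only if there exist positive integers $n_1,\dots,n_s$ with $\sum n_i=n$ and $[K_i:F]\mid n_i\,\c(\Delta\otimes_F K_i)$ for each $i$. Applying the capacity formula and the same arithmetic equivalence to each factor $K_i$ converts the last divisibility into $[K_i:F]\mid n_i\deg(\Delta)$, producing exactly the condition \eqref{eq:31}. The only nontrivial ingredient anywhere in the proof is the local Brauer-group input used to compute $\c(\Delta\otimes_F K)$; the rest is bookkeeping organized around Lemmas~\ref{23} and \ref{24}.
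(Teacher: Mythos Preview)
Your proposal is correct and follows essentially the same route as the paper: both derive (a)$\Leftrightarrow$(b) from the general embedding criterion (Lemma~\ref{23}/\ref{24}), compute $\c(\Delta\otimes_F K)=\gcd([K:F],\deg\Delta)$ via the local invariant map, and finish (b)$\Leftrightarrow$(c) with a coprimality argument. Your arithmetic step is in fact slightly cleaner than the paper's, which asserts $\gcd(\delta',k)=1$ when what is actually used (and true) is $\gcd(\delta',k/c)=1$.
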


It follows from Lemma~\ref{24} that  the statements (a) and (b) are
equivalent. The implication $(b)\implies (c)$ is trivial. Put
$\delta:=\deg(\Delta)$ and $k:=[K:F]$. If
$\inv(\Delta)=a/\delta$ with $\gcd(a,\delta)=1$, then (see \cite{serre:lf})
\[ \inv(\Delta\otimes_F
K)=[K:F]\inv(\Delta)=\frac{ak}{\delta}=\frac{a'}{\delta'}, \quad
\text{with \ } \gcd(a',\delta')=1, \]
where $\delta=\delta' c$, $ak=a'c$, and $c:=\gcd(k,\delta)$.
It follows that
\begin{equation}
  \label{eq:32}
  \c(\Delta\otimes_F K)=\gcd([K:F],\deg(\Delta)).
\end{equation}
Note that $\gcd(\delta',k)=1$, so we have
\[ k\, |\ n \delta\iff k\,|\, nc \delta' \iff k\,|\, nc. \]

The statement Lemma~\ref{31} (2) follows from Lemma~\ref{24} and
Lemma~\ref{31} (1). \\

Now consider the case where $K=\prod^s_{i=1} K_i$ is a commutative
semi-simple $F$-algebra. Put
\begin{equation}
  \label{eq:33}
  c_i:=\gcd([K_i:F], \deg (\Delta)) \quad \text{and}\quad  \ell_i:=[K_i:F]/c_i.
\end{equation}
For any positive integer $n_i$, we have
\begin{equation}\label{eq:34}
  [K_i:F]\,|\, n_i \deg(\Delta) \iff \ell_i\,|\, n_i.
\end{equation}
Put
\begin{equation}
  \label{eq:35}
  \calE_F(K,A):=\{ x=(x_1,\dots, x_s)\in \bbN^s\, |\, \sum_{i=1}^s \ell_i
  x_i=\dim_\Delta V \, \}.
\end{equation}
If a tuple $\bfn=(n_1,\dots, n_s)$ is a solution to (\ref{eq:31}), then
the tuple $x=(x_1,\dots, x_s)$, where $x_i:=n_i/\ell_i$, is an element
in $\calE_F(K,A)$. Conversely, any element $x$ in $\calE_F(K,A)$ gives
a solution $\bfn$ to (\ref{eq:31}) by setting $n_i=\ell_i x_i$.
Recall that $A^\times \backslash \Hom_F^*(K,A)$ is the set of
equivalence classes of embeddings of $F$-algebras from $K$ into $A$.

\begin{prop}\label{32}
  There is a natural bijection
  \begin{equation}
    \label{eq:36}
    e: A^\times \backslash \Hom_F^*(K,A)\isoto \calE_F(K,A).
  \end{equation}
\end{prop}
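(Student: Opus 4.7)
The plan is to interpret the right-hand side as a classification of $(K,\Delta)$-bimodule structures on $V$ and invoke the dictionary already set up in Section~\ref{sec:22}. Concretely, an $F$-embedding $\varphi:K\hookrightarrow A=\End_\Delta(V)$ is the same data as a right $\Delta\otimes_F K$-module structure on $V$ extending the given $\Delta$-module structure (note $K^\opp=K$ since $K$ is commutative), and by Lemma~\ref{25} two embeddings are $A^\times$-equivalent iff the resulting bimodules on $V$ are isomorphic. Thus $\calO^*_{K,A}$ identifies with the set of isomorphism classes of $(K,\Delta)$-bimodule structures on the fixed $\Delta$-module $V$.

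Next I would exploit the decomposition $K=\prod_{i=1}^s K_i$ via primitive idempotents $e_1,\dots,e_s$ to write any such bimodule as $V=\bigoplus_{i=1}^s V_i$ with $V_i=e_iV$ a right $\Delta\otimes_F K_i$-module. The algebra $\Delta\otimes_F K_i$ is central simple over $K_i$ of capacity $c_i=\gcd([K_i:F],\deg\Delta)$ by \eqref{eq:32}, hence isomorphic to $\Mat_{c_i}(\Delta_i')$ for a unique $K_i$-division algebra $\Delta_i'$ of degree $\deg(\Delta)/c_i$, and possesses a unique simple module $S_i$ up to isomorphism. A direct dimension count shows
\[ \dim_F S_i=[K_i:F]\cdot c_i\cdot[\Delta_i':K_i]=[K_i:F]\cdot\frac{(\deg\Delta)^2}{c_i}, \]
so $\dim_\Delta S_i=[K_i:F]/c_i=\ell_i$. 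Hence $V_i\cong S_i^{\oplus x_i}$ for a unique $x_i\in\bbN$, and the tuple $(x_1,\dots,x_s)$ is a complete isomorphism invariant of the bimodule, subject to $\sum_i \ell_i x_i=\dim_\Delta V=n$, which is exactly the condition defining $\calE_F(K,A)$.

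I would then define $e(\varphi):=(x_1,\dots,x_s)$. Well-definedness on $A^\times$-orbits and injectivity are both immediate from the preceding paragraph combined with Lemma~\ref{25}. For surjectivity, given $(x_1,\dots,x_s)\in\calE_F(K,A)$, one forms the $(K,\Delta)$-bimodule $V':=\bigoplus_{i=1}^s S_i^{\oplus x_i}$; by construction $\dim_\Delta V'=\sum_i\ell_i x_i=n=\dim_\Delta V$, so there is a $\Delta$-linear isomorphism $V'\isoto V$ through which the $K$-action transports to a ring homomorphism $\varphi:K\to\End_\Delta(V)=A$. Since each $V_i'\neq 0$ and $K$ acts on $V_i'$ through $K_i$, none of the factors $K_i\to A$ is zero, so $\varphi$ is an embedding with $e(\varphi)=(x_1,\dots,x_s)$.

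The one computational step requiring care is the identity $\dim_\Delta S_i=\ell_i$; once this is in place, the structure theorem for modules over the semisimple algebra $\Delta\otimes_F K=\prod_i(\Delta\otimes_F K_i)$ does all the work and no local hypothesis on $F$ is needed beyond what is already used to compute $c_i$ via \eqref{eq:32}. The main conceptual obstacle is nothing more than keeping the bookkeeping of capacities versus degrees straight when passing between $F$-, $K_i$-, and $\Delta$-dimensions.
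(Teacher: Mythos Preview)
Your proposal is correct and follows essentially the same approach as the paper: both identify $\calO^*_{K,A}$ with isomorphism classes of $(K,\Delta)$-bimodule structures on $V$ via Lemma~\ref{25}, decompose along the idempotents of $K$, and use simplicity of each $\Delta\otimes_F K_i$ to reduce the invariant to the tuple of $\Delta$-dimensions (equivalently, multiplicities of the simple modules). Your explicit computation $\dim_\Delta S_i=\ell_i$ is exactly what underlies the paper's formula $e(\varphi)=(\dim_\Delta V_1/\ell_1,\dots,\dim_\Delta V_s/\ell_s)$ and the conversion \eqref{eq:34}; the paper simply packages this into the preceding discussion rather than recomputing it inside the proof.
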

\begin{proof}
  Let $\varphi$ and $\varphi'$ be two maps in $\Hom^*_F(K,A)$, and
  let $V_\varphi$ and $V_{\varphi'}$ be the induced
  $(K,\Delta)$-bimodule structures on $V$. Write
\[ V_\varphi=V_1\oplus \dots \oplus V_s \quad \text{and}\quad
  V_{\varphi'}=V'_1\oplus \dots \oplus V'_s, \]
  where $V_i$ and $V'_i$ are $(K_i,\Delta)$-bimodules.
  We have shown (Lemma~\ref{25}) that $\varphi$ and $\varphi'$ are
  equivalent if and
  only if $V_\varphi$ and  $V_{\varphi'}$ are isomorphic as
  $(K,\Delta)$-bimodules, equivalently, $V_i\simeq V'_i$ as $(K_i,
  \Delta)$-bimodules for $i=1,\dots, s$. Since each $\Delta\otimes_F K_i$
  is simple, the latter is the same as the condition $\dim_\Delta
  V_i=\dim_\Delta V'_i$ for $i=1,\dots, s$. One associates to
  $\varphi$ a tuple
\[ \bfn=(\dim_\Delta V_1,\dots, \dim_\Delta V_s) \]
which satisfies the condition $(\ref{eq:31})$ and
determines the map $\varphi$ up to equivalence. As such tuples are
in one-to-one correspondence with elements in $\calE_F(K,A)$. Then we
show a bijection map $e:A^\times \backslash \Hom_F^*(K,A)\to
\calE_F(K,A)$ which is given by
\begin{equation}
  \label{eq:37}
  e(\varphi)=(\dim_\Delta V_1/\ell_1, \dots, \dim_\Delta V_s/\ell_s).
\end{equation}
This completes the proof of the proposition. \qed
\end{proof}

% Let $\Hom_F(K,A)$ denote the set of $F$-algebra homomorhism from $K$
% to $A$ and let  $\Hom^*_F(K,A)\subset \Hom_F(K,A)$ the subset
% consisting of embeddings. For any two maps $\varphi

\subsection{Global results}
\label{sec:32} In the remaining of this article
let $F$ be a global field. Let $A$ be
a central simple algebra over $F$ and $K$ a finite field extension
%\footnote{We can also consider the more general case where $K$ is
%  a finite field extension. The problem can be reduced to the cases
%  where $K$ is either purely inseparable or separable over $F$. The
%  case where $K/F$ is pureply separable is easier because $K_v$ is a
%  field as $F$ is separable closed in the field extension $F_v/F$.}
over $F$. We use the following notations.

\begin{itemize}

\item $A=\End_\Delta(V)$, where $\Delta$ is the division part of $A$,
  and $V$ is a finite right $\Delta$-module of rank $n$.

\item $k:=[K:F]$ and $\delta_0:=\deg(\Delta)$.

\item For any place $v$ of $F$, denote by $F_v$ the completion of $F$
  at $v$. Put
  \[ K_v:=K\otimes F_v=\prod_{w|v} K_w, \quad A_v:=A\otimes F_v, \quad
  \Delta_v=\Delta\otimes F_v=\Mat_{s_v}(D_v), \]
  where $D_v$ is the division part of the central simple algebra
  $\Delta_v$ (we do not use the letter $D$ as an algebra over $F$ in
  this section; do not confuse
  $D_v$ as the completion of $D$) and $s_v$ is the capacity
  of $\Delta_v$.

\item $k_w:=[K_w:F_v]$ and $d_v:=\deg(D_v)$,
  where $w$ is a place of $K$ over $v$.

\item $\Delta\otimes_F K=\Mat_c(\Delta')$ and $\delta':=\deg(\Delta')$,
  where $\Delta'$ is the
  division part of the central simple algebra $\Delta\otimes K$ over
  $K$, and $c$ is its capacity. One has
\begin{equation}
    \label{eq:38}
   \delta_0=\delta' c.
\end{equation}
%\[  \]

\item For any place $w$ of $K$, put
\[ \Delta'_w:=\Delta'\otimes_K K_w=\Mat_{t_w}(D'_w), \quad
d'_w:=\deg(D'_w), \]
where $D'_w$ is the division part of the central simple algebra
$\Delta'_w$ and $t_w$ is the local capacity of $\Delta'$ at $w$.

\item $c_w:=\c(D_v\otimes_{F_v} K_w)$, i.e. $D_v\otimes_{F_v}
  K_w=\Mat_{c_w}(D'_w)$. One has
  \begin{equation}
    \label{eq:39}
    d_v=d_w' c_w.
  \end{equation}
It follows from
\begin{equation*}
  \begin{split}
   \Delta\otimes_F K_w &=(\Delta \otimes F_v)\otimes_{F_v} K_w=
 \Delta_v \otimes K_w=\Mat_{s_v c_w}(D'_w)\quad \text{and} \\
\Delta\otimes_F K_w & =(\Delta\otimes_F K)\otimes_K
K_w=\Mat_c(\Delta')\otimes_K K_w=\Mat_{c t_w}(D'_w)
  \end{split}
\end{equation*}
that
\begin{equation}
  \label{eq:310}
  s_v\, c_w=c\,t_w.
\end{equation}

% \[ s_v\, c_w=c\,t_w; \]

\item For any rational number $a\in \Q$, we write $\bfd(a)$ for the positive
  denominator of $a$ in its reduced form, and $\bfn(a)$ for its
  numerator.

\item For each place $v$ of $F$, write
\[ \inv_v(\Delta)=\frac{a_v}{\delta_0}=\frac{ a'_v\, s_v}{d_v\,
  s_v}=\frac{a'_v}{d_v},\quad \gcd(a'_v,d_v)=1
  \text{\ and\ } s_v=\gcd(a_v,\delta_0). \]
  One has, by the Grunwald-Wang theorem
\begin{equation}\label{eq:311}
  \delta_0=\lcm \{d_v\}_{v\in V^F} \quad \text{and}\quad
  \left ( \gcd\{a_v\}_{v\in V^F},\delta_0 \right)=1,
\end{equation}
where $V^F$ denotes the set of all places of $F$.
\item For each place $w$ of $K$, write
\[ \inv_w(\Delta')=\frac{b_w}{\delta'}=\frac{{b'}_w\, t_w}{d'_w\,
  t_w}=\frac{{b'}_w}{d'_w},\quad \gcd({b'}_w,d'_w)=1
  \text{\ and\ } t_w=\gcd(b_w,\delta'). \]
  One has
\begin{equation}\label{eq:312}
  \delta'=\lcm \{d'_w\}_{w\in V^K} \quad \text{and}\quad
  \left ( \gcd\{b_w\}_{w\in V^K},\delta' \right)=1,
\end{equation}
where $V^K$ denotes the set of all places of $K$.

\item It follows from $\inv(D'_w)=\inv(D_v)[K_w:F_v]$ (see
  \cite{serre:lf}) that
  \begin{equation}
    \label{eq:313}
    c_w=\gcd(d_v,k_w).
  \end{equation} \\
\end{itemize}

Given $K$ and $A$, we have, for each place $v$ of $F$,
\begin{itemize}
\item a tuple
$(k_w)_{w|v}$ of positive integers,  and
\item a rational number $\inv_v(\Delta)=a'_v/d_v$
\end{itemize}
satisfying the following conditions:
\begin{itemize}
\item [(a)] $\sum_{w|v} k_w=k$ for all $v\in V^F$,
\item [(b)]
  \begin{itemize}
  \item [(i)] $d_v=1$ if $v$ is a complex place,
  \item [(ii)]$d_v\in\{1,2\}$ if $v$
  is a real place,
  \item [(iii)] $d_v=1$ for almost all $v$, and
  \item [(iv)] (Global class field theory) one has
\[  \sum_{v\in V^F} \frac{{a'}_v}{d_v}=0 \quad
  (\text{\ in\ } \Q/\Z). \]
  \end{itemize} \
\end{itemize}

We compute all other numerical invariants $\delta_0$, $c_w$, $d'_w$,
$\delta'$ and $c$ as follows.
\begin{itemize}
\item [(i)] The (global) degree $\delta_0$ of $\Delta$ can be
computed by (\ref{eq:311}).

\item [(ii)] Then one computes the local capacity $c_w$
of $D_v\otimes_{F_v} K_w$ and
the (local) degree $d'_w$ of $D'_w$ by
$(\ref{eq:313})$ and $(\ref{eq:39})$, respectively.

\item [(iii)]  Using (\ref{eq:312}) we compute
the (global) degree $\delta'$ of $\Delta'$ and then
compute the (global) capacity $c$ of $\Delta\otimes K$
using (\ref{eq:38}).
\end{itemize}

We define the following condition (G stands for ``global'') \\
% and L stands for local):

\begin{itemize}
\item [(\bfG)]  $k\,|\,n\, c$.
\end{itemize}

\begin{prop}\label{33}
  The set $\Hom_F(K, A)$ is non-empty if and only if the
  condition $(\bfG)$ holds.
\end{prop}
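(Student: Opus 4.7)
The plan is to derive Proposition~\ref{33} as an immediate specialization of Lemma~\ref{23} to the case $A'=K$. Since $K$ is a field, it is in particular a (commutative) simple $F$-algebra, so Lemma~\ref{23} applies directly and asserts that an $F$-embedding $K\hookrightarrow A$ exists if and only if $[K:F]\mid n\cdot\c(\Delta\otimes_F K^{\opp})$. The key observation is that $K^{\opp}=K$ because $K$ is commutative, so $\c(\Delta\otimes_F K^{\opp})=\c(\Delta\otimes_F K)=c$, and the divisibility produced by Lemma~\ref{23} is exactly $k\mid nc$, which is the condition $(\bfG)$. Both directions of the proposition then follow simultaneously, without any recourse to the global nature of $F$.

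The substantive content that remains, although not strictly needed for the \emph{statement} of Proposition~\ref{33}, is the explicit effective computation of the invariant $c$ from the local data $\{(k_w)_{w\mid v},\,\inv_v(\Delta)\}_{v\in V^F}$ that has been catalogued in the preceding setup. I would therefore append a brief reminder of the computation chain for later use: first recover the global degree $\delta_0=\deg(\Delta)$ via the Hasse--Brauer--Noether theorem through (\ref{eq:311}); next obtain the local capacities $c_w=\gcd(d_v,k_w)$ from (\ref{eq:313}) and the local degrees $d'_w=d_v/c_w$ from (\ref{eq:39}); apply Hasse--Brauer--Noether once more via (\ref{eq:312}) to assemble $\delta'=\lcm_{w\in V^K}\{d'_w\}$; finally read off $c=\delta_0/\delta'$ from (\ref{eq:38}).

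There is no genuine obstacle in the argument: all the hard work has been absorbed into Lemma~\ref{23}. The only point deserving explicit mention is the reduction $K^{\opp}=K$, after which the proof is a one-line application; the global input of $F$ enters only indirectly, in that the Hasse--Brauer--Noether theorem will later be needed if one wants to compute $c$ in practice rather than leave it as the abstract capacity of $\Delta\otimes_F K$.
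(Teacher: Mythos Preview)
Your proof is correct and matches the paper's own argument exactly: the paper's proof of Proposition~\ref{33} is the single sentence ``This follows from Lemma~\ref{23},'' and your observation that $K^{\opp}=K$ for commutative $K$ is precisely the point that makes this application go through. The additional remarks on computing $c$ from local data are also in line with the paper's preceding discussion and are correctly stated.
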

\begin{proof}
  This follows from Lemma~\ref{23}. \qed
\end{proof}

Now we formulate the corresponding local conditions. Note that
\[ K_v=\prod_{w|v} K_w \quad \text{and} \quad A_v=\Mat_{n s_v}(D_v). \]
Put
\begin{equation}
  \label{eq:3135}
   \calE:=A^\times \backslash \Hom_F(K, A);
\end{equation}
the Noether-Skolem theorem says that if this set is non-empty then it
has one element.
For each place $v$ of $F$, define a set (c.f. (\ref{eq:35}))
\begin{equation}
  \label{eq:314}
\calE_v:=\calE_{F_v}(K_v,A_v)=\{(x_w)_{w|v}\,|\, x_w\in \bbN,\
\sum_{w|v} \ell_w x_w=n s_v \, \},
\end{equation}
where $\ell_w:=k_w/c_w$.
Define the following condition (L stands for ``local'') \\

\begin{itemize}
\item [(\bfL)] The set $\calE_v$ is non-empty for all $v\in V^F$.  \\
\end{itemize}

\begin{prop}\label{34}
  There is an embedding of $K_v$ in
$A_v$ over $F_v$ if and only if the set $\calE_v$ is non-empty.
\end{prop}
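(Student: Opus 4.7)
The plan is to apply Lemma~\ref{31}(2) directly to the local data $(K_v, A_v)$ and then rewrite the resulting divisibility conditions in terms of the quantities $\ell_w$ appearing in the definition of $\calE_v$. Since $A = \Mat_n(\Delta)$ and $\Delta_v = \Mat_{s_v}(D_v)$, the local algebra decomposes as $A_v = \Mat_{n s_v}(D_v)$, so $A_v$ has division part $D_v$ of degree $d_v$ and capacity $\c(A_v) = n s_v$. Likewise $K_v = \prod_{w \mid v} K_w$ is a commutative semi-simple $F_v$-algebra with local factors $K_w$ of degree $k_w$ over $F_v$.

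Applying Lemma~\ref{31}(2) to $A_v$ and $K_v$ yields: an $F_v$-embedding of $K_v$ into $A_v$ exists if and only if there exist positive integers $(n_w)_{w \mid v}$ with
\[
\sum_{w \mid v} n_w \;=\; n s_v \qquad\text{and}\qquad k_w \,\bigm|\, n_w d_v \quad \text{for all } w \mid v.
\]

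The next step is to translate the divisibility $k_w \mid n_w d_v$ into the form $\ell_w \mid n_w$ appearing in the definition~(\ref{eq:314}). By~(\ref{eq:313}) we have $c_w = \gcd(k_w, d_v)$, and writing $k_w = \ell_w c_w$ with $\ell_w = k_w / c_w$ gives $\gcd(\ell_w,\, d_v/c_w) = 1$. Cancelling a factor of $c_w$ then shows
\[
k_w \mid n_w d_v \;\iff\; \ell_w \mid n_w \cdot (d_v/c_w) \;\iff\; \ell_w \mid n_w.
\]
This is the one technical step, essentially identical to the manipulation used for~(\ref{eq:34}) in the field-extension case.

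Finally, setting $x_w := n_w / \ell_w$ yields a bijection between solution tuples $(n_w)$ to the system above and tuples $(x_w) \in \calE_v$, since $\sum_w n_w = n s_v$ becomes $\sum_w \ell_w x_w = n s_v$. Hence an $F_v$-embedding exists iff $\calE_v \neq \emptyset$. The main (and essentially only) obstacle is bookkeeping: one must correctly identify the capacity and division part of $A_v$ (so that Lemma~\ref{31}(2) is applied with the right parameters $n s_v$ and $d_v$, not $n$ and $\delta_0$), and verify the gcd identity $\gcd(\ell_w, d_v/c_w) = 1$; everything else is a reformulation of prior results.
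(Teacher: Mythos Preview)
Your proof is correct and follows essentially the same route as the paper. The paper's own proof simply cites Proposition~\ref{32}, whose content (together with the discussion preceding it, in particular equations~(\ref{eq:33})--(\ref{eq:35})) is exactly the argument you have written out: applying the criterion of Lemma~\ref{31}(2) to the local pair $(K_v,A_v)$ with capacity $n s_v$ and division part $D_v$, and then reparametrizing via $x_w=n_w/\ell_w$.
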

\begin{proof}
  This follows from Proposition~\ref{32}. \qed
\end{proof}
% This following result is a formulation of
% Proposition~\ref{32} shows that there is an embedding from $K_v$ into
% $A_v$ over $F_v$ if and only if the set $\calE_v$ is non-empty.
We have the following implication
\begin{center}
  the condition (\bfG) holds $\implies$ the condition (\bfL) holds.
\end{center}
The local-global principle then asks whether the converse is also true.
%these two conditions
%are equivalent.

\subsection{Special vectors and the local-global principle}
\label{sec:33}
Let
\[ e_v: A_v^\times \backslash \Hom_{F_v}^*(K_v,A_v)\isoto \calE_v
\]
be the corresponding bijection obtained in Proposition~\ref{32}.
Let us suppose first that the set
$\Hom_F(K, A)=\Hom^*_F(K, A)$ of embeddings of
$K$ into $A$ over $F$ is {\it non-empty}.
For any element $\varphi$ in $\Hom_F(K,A)$,
let $\varphi_v\in \Hom_{F_v}(K_v,A_v)$ be the extension
of $\varphi$ by $F_v$-linearity,
and let $[\varphi_v]$ be its equivalence class.
Then one defines an element $\bfx_v\in \calE_v$ by
\[ \bfx_v:=e_v([\varphi_v]). \]
The association $\varphi\mapsto \bfx_v$ induces a well-defined map,
which we denote again by $e_v$,
\[ e_v:\calE\to \calE_v. \]
The non-emptiness of $\Hom_F(K,A)$ implies the existence of such a
vector $\bfx_v$ in $\calE_v$ for each place $v\in V^F$.
%which we will call {\it special vectors.}
We now calculate these special vectors
explicitly.

The map $\varphi$ gives rise to a $(K,\Delta)$-bimodule structure on
$V$. Since $V$ is free $K$-module of rank $n\delta_0^2/k$, its
completion $V\otimes_F F_v$ is also a free $K_v$-module of same rank.
Therefore, one has
the decomposition
\[ V\otimes F_v=\bigoplus_{w|v} V_w,\]
where each factor $V_w$ is a $(K_w,\Mat_{s_v}(D_v))$-bimodule of
$K_w$-rank $n\delta_0^2/k$ (recall that $\Delta_v=\Mat_{s_v}(D_v)$).
Using the Morita equivalence, the module
 $V_w$ is isomorphic to $W_w^{\oplus s_v}$ for a
$(K_w,D_v)$-bimodule $W_w$ of $D_v$-rank $n s_v k_w/k$.
Using the formula (\ref{eq:37}), the
$w$-component $\bfx_w$ of the vector $\bfx_v$ is given by
\begin{equation}
  \label{eq:315}
  \bfx_w:=\dim_{D_v} W_w/\ell_w=n s_v c_w/k,
\end{equation}
which is a positive integer. Recall that $c_w=\gcd(k_w,d_v)$ and
$\ell_w=k_w/c_w$.

Therefore, this leads us to the following definition of
{\it special vectors} no matter the set $\calE$ is non-empty or not.
For each place $v$ of $F$ we
define a vector (still denoted by)
$\bfx_v=(\bfx_w)_{w|v}\in \prod_{w|v}\Q_{>0}$ by (\ref{eq:315}),
and we call them {\it special vectors}.
The above calculation shows if the set $\calE$ is non-empty, then the
vector $\bfx_v$ is the image of the map $e_v$.

\begin{prop}\label{35}
  Notations as above. If the set $\calE$ is non-empty, then one has
\[ \bfx_v\in \calE_v, \quad \forall\, v\in V^F, \]
or equivalently, each vector $\bfx_v$ lies in $\prod_{w|v} \bbN$ for all
$v\in V^F$.
\end{prop}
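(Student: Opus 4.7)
The plan is to unpack, place by place, the bimodule structure on $V$ coming from an $F$-embedding $\varphi\in\Hom^*_F(K,A)$ and then identify $\bfx_w$ with the rank of a genuine module over a division algebra, which forces it to be a positive integer.

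First I would fix $\varphi\in\Hom^*_F(K,A)$, which gives $V$ the structure of a $(K,\Delta)$-bimodule. Since $K$ is a field and $\dim_F V=n\delta_0^2$ is finite, $V$ is automatically free over $K$, of rank $n\delta_0^2/k$. Base-changing to $F_v$ and using $K_v=\prod_{w\mid v}K_w$ gives a decomposition of $(K_v,\Delta_v)$-bimodules
\[
V\otimes_F F_v\;=\;\bigoplus_{w\mid v} V_w,
\]
where each $V_w=V\otimes_K K_w$ is a free $K_w$-module of rank $n\delta_0^2/k$ and in particular is non-zero.

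Next I would strip off the two layers of matrix factors via Morita equivalence. Using $\Delta_v=\Mat_{s_v}(D_v)$, the right $\Delta_v$-module $V_w$ takes the form $W_w^{\oplus s_v}$ for a $(K_w,D_v)$-bimodule $W_w$, and a quick $F_v$-dimension count (using $s_v d_v=\delta_0$) gives $\dim_{D_v}W_w=nk_ws_v/k$. Then, viewing $W_w$ as a right module over $D_v\otimes_{F_v}K_w=\Mat_{c_w}(D'_w)$, Morita again yields $W_w\cong U_w^{\oplus c_w}$ for a right $D'_w$-module $U_w$. A further dimension count, combined with the identity $d_v=c_w d'_w$ from \eqref{eq:39}, shows
\[
\dim_{D_v}W_w\;=\;\ell_w\cdot\dim_{D'_w}U_w,
\qquad \text{so}\qquad \bfx_w\;=\;\frac{\dim_{D_v}W_w}{\ell_w}\;=\;\dim_{D'_w}U_w.
\]
This is the key point: $\bfx_w$ is realized as the $D'_w$-rank of an actual non-zero module (non-vanishing follows because $V_w\neq 0$ and $s_v,c_w>0$), hence $\bfx_w\in\bbN$ and in fact $\bfx_w\ge 1$.

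Finally I would verify the summation constraint defining $\calE_v$: plugging in $\ell_w=k_w/c_w$ and $\bfx_w=ns_vc_w/k$ gives
\[
\sum_{w\mid v}\ell_w\bfx_w\;=\;\frac{ns_v}{k}\sum_{w\mid v}k_w\;=\;ns_v,
\]
since $\sum_{w\mid v}k_w=[K:F]=k$. Thus $\bfx_v\in\calE_v$, and the fact that it coincides with $e_v([\varphi_v])$ is exactly the formula \eqref{eq:37} applied to the decomposition above. The main obstacle is bookkeeping: one must confirm that the two successive Morita reductions really match the numerical invariants introduced in Section~\ref{sec:32} (in particular the identities $s_vd_v=\delta_0$ and $d_v=c_wd'_w$), and that the non-vanishing of the intermediate modules $W_w,U_w$ propagates from $V\neq 0$; beyond that the argument is essentially a dimension count.
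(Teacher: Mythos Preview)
Your argument is correct and follows essentially the same line as the paper: the discussion preceding Proposition~\ref{35} computes $\bfx_v$ by decomposing $V\otimes F_v$ into $(K_w,\Delta_v)$-bimodules, applies Morita via $\Delta_v=\Mat_{s_v}(D_v)$, and then identifies $\bfx_v$ with $e_v([\varphi_v])$, which lies in $\calE_v$ by Proposition~\ref{32}. Your only (harmless) deviation is that you carry out a second Morita step through $D_v\otimes_{F_v}K_w=\Mat_{c_w}(D'_w)$ to exhibit $\bfx_w=\dim_{D'_w}U_w$ directly as a module rank, whereas the paper leaves this implicit in the appeal to Proposition~\ref{32}; this makes your write-up slightly more self-contained but is not a genuinely different route.
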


If we denote by $\bar \bfx_w$ the class of $\bfx_w$ in $\Q/\Z$, then
we associate to the pair $(K,A)$ an element
\begin{equation}
  \label{eq:316}
  \bar \bfx=(\bar \bfx_w)_{w\in V^K} \in \bigoplus_{w\in V^K}
\Q/\Z.
\end{equation}
% \[ \bar \bfx=(\bar \bfx_w)_{w\in V^K} \in \bigoplus_{w\in V^K}
% \Q/\Z. \]
Then Proposition~\ref{35} states that
the vanishing of the class $\bar \bfx$ is
a necessary
condition for the set $\calE$ to be non-empty. The following result
states that this is the only obstruction to the local-global
principle.

\begin{thm}\label{36} Notations as above. We have
\[ \Hom^*_F(K,A)\neq \emptyset \iff \bar \bfx=0. \]
% Suppose $\calE_v$ contains the special vector for all $v\in V^F$.
% Then K can be embedded into F if and only if there exists an
% embedding
% from $K_v$ into $A_v$ for all $v \in V^F$.
\end{thm}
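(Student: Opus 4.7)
The forward direction is already Proposition~\ref{35}: if a global $F$-embedding exists, each local special vector $\bfx_v$ lies in $\calE_v$, so every component $\bfx_w$ is a positive integer and hence $\bar\bfx = 0$. So the task is the reverse direction.

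My plan is to deduce the numerical condition $(\bfG)$ of Proposition~\ref{33}, namely $k \mid nc$, from the hypothesis $\bar\bfx = 0$. The key move is to rewrite the special vector components using the identity $s_v c_w = c\,t_w$ from \eqref{eq:310}:
\[
\bfx_w \;=\; \frac{n s_v c_w}{k} \;=\; \frac{n c\, t_w}{k}.
\]
The hypothesis $\bar\bfx = 0$ then reads $k \mid n c\, t_w$ for every $w \in V^K$. Setting $g := \gcd(k, nc)$ and $m := k/g$, and writing $nc = g e$ with $\gcd(m,e) = 1$ (forced by $\gcd(k/g, nc/g) = 1$), the divisibility $k \mid nc\,t_w$ simplifies to $m \mid e\,t_w$, and coprimality of $m$ and $e$ forces $m \mid t_w$ for every $w$.

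Since $t_w = \gcd(b_w, \delta')$ divides both $b_w$ and $\delta'$, the previous step gives $m \mid b_w$ for all $w \in V^K$ and $m \mid \delta'$; hence $m$ divides $\gcd\!\bigl(\gcd\{b_w\}_{w \in V^K},\, \delta'\bigr)$, which equals $1$ by \eqref{eq:312} (the packaging of global class field theory applied to $\Delta'$ over $K$). Therefore $m = 1$, i.e.\ $k \mid nc$, and Proposition~\ref{33} yields the desired $F$-embedding $K \hookrightarrow A$. The main point is recognizing \eqref{eq:310} as the bridge between the locally defined $\bfx_w$ and the global invariant $c$; once that substitution is made, the coprimality condition in \eqref{eq:312} finishes the argument cleanly, with no further recourse to cohomological machinery needed.
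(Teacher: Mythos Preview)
Your proof is correct and follows essentially the same strategy as the paper: rewrite $\bfx_w$ in terms of the global invariant $c$ (you via \eqref{eq:310}, the paper via \eqref{eq:39} together with $s_v d_v=\delta_0$, arriving at the identical expression $\bfx_w=nct_w/k=n\delta_0/(kd'_w)$), and then invoke the Hasse--Brauer--Noether data in \eqref{eq:312} to force $k\mid nc$. The only cosmetic difference is that you pass through a $\gcd$ reduction and the coprimality half of \eqref{eq:312}, whereas the paper uses the $\lcm$ half $\delta'=\lcm_w d'_w$ directly; the latter is a shade quicker, but the content is the same.
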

\begin{proof}
  Note that the condition $\bar \bfx=0$ implies $\bfx_v\in \calE_v$
  and hence $\Hom^*_{F_v}(K_v,A_v)$ is non-empty for all $v\in V^F$.
The implication $\implies$ is already proved.
To show the other direction, we must
  show that the condition (\bfG) $k\mid nc$ holds. Using
$c=\delta_0 / \delta'$ and $\delta'=\lcm\{d'_w\}$, we rewrite the
condition (\bfG) as
\begin{equation}
  \label{eq:318}
  k  \mid ({n\delta_0}/{d'_w} ),\quad \forall\, w \in V^K.
  \end{equation}

  %Now given any special vector $\bfx_v=(\bfx_w)_{w\mid v}$.
   Using (\ref{eq:39}) and (\ref{eq:315}), we have
  \[
  \bfx_w=n s_v c_w/k=n s_v d_v/k d'_w=n \delta_0 /k d'_w\in \bbN
  \]
  for all $w\in V^K$.
  This verifies the condition (\bfG) and hence proves the theorem. \qed
  \end{proof}

\subsection{An example.}
\label{sec:34}
We will show an example of a pair $(K,A)$,
where $K/F$ is a Galois extension and $A$ is a central simple
$F$-algebra so that
\begin{itemize}
\item the set $\Hom^*_{F_v}(K_v,A_v)$ is non-empty for all $v\in V^F$,
  and
\item the set $\Hom_F(K,A)$ is empty.
\end{itemize}
This particularly shows that the question (Q3) has a negative
answer. Let $K/F$ be a Galois
extension of degree $8$. Choose two places $v_1$ and $v_2$ of $F$ so
that
$k_w=2$ for all $w|v_1$ or $w|v_2$. Such places exist by the
Chebotarev density theorem. Let $A$ be a central simple
algebra over $F$ of degree $24$ that is ramified exactly at
the two places $v_1$ and $v_2$, and
\[ A_{v_1}=\Mat_{6}(D_{v_1}),\quad \text{and}\quad
A_{v_2}=\Mat_6(D_{v_2}), \]
where $D_{v_1}$ and $D_{v_2}$ are central division algebras of degree
$4$. The existence of such an $A$ follows from the global class field
theory. One has $c_w=2$ and $\ell_w=1$. The sets $\calE_{v}$ for
unramified places $v$ are non-empty. For $v=v_i$, $i=1,2$,
one see that
\[ \calE_{v_i}=\{(x_i)\in \bbN^4 |\sum_{i=1}^4 x_i=6\}\neq
\emptyset. \]
By Proposition~\ref{34},
the sets $\Hom^*_{F_v}(K_v,A_v)$ are non-empty for all $v\in V^F$.
On the other hand, $\bfx_w=6 c_w/k=12/8$
for any $w|v_1$ or $w|v_2$, which is not an
integer. By Theorem~\ref{36}, the set $\Hom_{F}(K,A)$ is empty.

The same argument shows that there is a Galois extension $K/F$ of
degree $p^m$, and a central simple algebra $A/F$ of degree $p^m q$,
where $p$ and $q$ are primes,  with
$m \ge 2$ and $p<q$, so that the local-global principle for
embedding $K$ into $A$ fails. One takes $k_w=p$ and $d_v=p^2$.
Then $c_w=p$ and $\ell_w=1$, We see that
$\calE_{v_i}$ is non-empty from the inequality $p^{m-1}\le p^{m-2}q$.
However, $\bfx_w=p^{m-2}q p/p^m\notin \bbN$.

% % \section{The local-global principle}

% % \label{sec:05}

% % In this section, we try to study the question
% % (Q$_3$). In general, the answer is negative. We will give some
% % counterexamples in the last subsection. Therefore, the question is
% % whether there is a criterion so that the local-global principle
% % holds. From now on, we use notations as in the previous section.

% % \begin{thm}\label{51}
% % Suppose $\calE_v$ contains the special vector for all $v\in V^F$.
% % Then K can be embedded into F if and only if there exists an
% % embedding from $K_v$ into $A_v$ for all $v \in V^F$.
% % \end{thm}
% % \begin{proof}
% % Note that we just need to show that locally embedding for each place
% % implies globally embedding, this is because the other way holds
% % automatically. To show this, it suffices to check the global
% % condition, $k\mid nc$, holds under the assumption. After replacing
% % $c=\delta / \delta'$ and $\delta'=\lcm\{d'_w\}$, we can rewrite the

% global condition to be
% \begin{center}
% $k \mid \frac{n\delta}{d'_w}$,\quad $\forall w \in V^K$
% \end{center}
% Now given any special vector $\bfx_v=(\bfx_w)_{w\mid v}$. For each
% $w$ using (\ref{eq:39}) and (\ref{eq:315}), we have
% \[
% \bfx_w=n s_v c_w/k=n s_v d_v/k d'_w=n \delta /k d'_w\in \bbN
% \]
% Hence we prove the theorem.\qed
% \end{proof}

\section{The local-global principle}
\label{sec:04} Keep the notation in \S~\ref{sec:32} and \ref{sec:33}.
In this section,
we study on the local-global principle in detail.

For the convenience of discussion, we make the following definitions.

\begin{defn}\label{41}\

\begin{itemize}
\item[(1)] Let $K$ be a finite field extension over a global field $F$
and $A$ be a central simple algebra over $F$. We say that the
Hasse principle for the pair $(K,A)$ holds if the following properties
are equivalent:
\[
\Hom^*_{F_v} (K_v, A_v)\neq \emptyset,\quad  \forall\, v\in V^F
\Longleftrightarrow  \Hom^*_{F} (K, A)=\Hom_{F}(K,A) \neq   \emptyset.
\]
In other words, the conditions (\bfG) and (\bfL) in \S~\ref{sec:32} are
equivalent.

% In other words, we have
% \[
% \Hom^*_{F_v} (K_v, A_v)\neq \emptyset\quad  \forall\, v\in V^F
% \Longleftrightarrow  \Hom^*_{F} (K, A)\neq   \emptyset.
%\]

\item[(2)] Let $F$ be a global field.
Let $(k,\delta)$ be a pair of two positive integers with
$k\mid \delta$.
We say that the Hasse principle for the pair $(k,\delta)$ holds
if for any finite field extension $K$ over $F$ of degree $k$
and for any central simple algebra $A$ over $F$
of degree $\deg(A)=\delta$,
the Hasse principle for the pair $(K,A)$ holds.
\end{itemize}
\end{defn}

Theorem~\ref{36} gives us an effective way to check the Hasse
principle. Namely, one only needs to check
\begin{equation}
  \label{eq:41}
  \bfx_w=ns_vc_w/k\stackrel{?}{\in} \bbN, \quad  \forall\, w\in V^K
\end{equation}
for a given pair $(K,A)$.

% Now we use this idea to show the following properties.

\subsection{Basic positive results}
\label{sec:41}

\begin{prop}\label{42}
Let $A$ be a central simple algebra over $F$ with index $
\i(A)=\delta_0$ and degree $\deg A= n\delta_0$, and let
$K$ be a finite field extension over $F$ of degree
$[K:F]\mid \deg A$.
Suppose one of the following properties holds:
\begin{itemize}
\item[(1)]  $[K:F]=\deg A$.

\item[(2)] $K$ splits $A$.

\item[(3)] For any place $v$, the algebra $A_v$ is either a
 division algebra or a matrix
 algebra over $F_v$. % for all $v\in V^F$.
\end{itemize}
Then the Hasse principle for the pair $(K,A)$ holds.
\end{prop}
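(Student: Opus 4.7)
My plan is to verify in each case that once $\Hom^*_{F_v}(K_v,A_v)$ is non-empty for every $v$, the global divisibility condition $(\bfG)$: $k\mid nc$ of Proposition~\ref{33} holds (equivalently, by Theorem~\ref{36}, every special vector $\bfx_w$ lies in $\bbN$), so that an $F$-embedding exists. For case~(1), $[K:F]=\deg A$, I would invoke Lemma~\ref{26}: both the local and global embedding problems reduce to a splitting statement. The local hypothesis together with Lemma~\ref{26}(2), applied to the semi-simple $F_v$-algebra $K_v=\prod_{w\mid v}K_w$ of $F_v$-dimension $\deg A_v$, gives $[A\otimes_F K_w]=0$ in $\Br(K_w)$ for every $w\in V^K$. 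The Hasse-Brauer-Noether theorem then forces $[A\otimes_F K]=0$ in $\Br(K)$, i.e.\ $K$ splits $A$, and Lemma~\ref{26}(1) produces the desired global embedding.

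Case~(2) is the quickest: if $K$ splits $A$, then $\Delta\otimes_F K\simeq \Mat_{\delta_0}(K)$, so $c=\delta_0$ and $nc=n\delta_0=\deg A$, which is divisible by $k$ by hypothesis. Hence $(\bfG)$ holds unconditionally, and the global embedding exists with no local input at all, making the Hasse principle trivially true. For case~(3) I would split according to the value of the capacity $n$. If $n\ge 2$, then $A_v=\Mat_{ns_v}(D_v)$ has capacity $\ge 2$ and therefore cannot be a division algebra, so the hypothesis forces $d_v=1$ at every place; by $(\ref{eq:311})$ this gives $\delta_0=\lcm\{d_v\}=1$, i.e.\ $A=\Mat_n(F)$. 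Then $c=\c(F\otimes_F K)=1$, $nc=n$ is divisible by $k$, and $(\bfG)$ holds.

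The remaining subcase $n=1$ is where $A=\Delta$ is itself a division algebra and each $\Delta_v$ is either a division algebra ($s_v=1$, $d_v=\delta_0$) or a matrix algebra ($d_v=1$, $s_v=\delta_0$). At a ``division place'' $v$ the local condition $\calE_v\neq\emptyset$ reads $\sum_{w\mid v}\ell_w x_w=1$ with each $x_w$ a positive integer (since every simple factor of $K_v$ must act non-trivially in an embedding), which forces a unique $w\mid v$ with $\ell_w=x_w=1$; hence $k_w=k$, $\gcd(k,\delta_0)=k$ so $k\mid\delta_0$, and $c_w=k$. A direct computation then yields $\bfx_w=ns_vc_w/k=1\in\bbN$ at every division place and $\bfx_w=\delta_0/k\in\bbN$ at every matrix place, so $\bar{\bfx}=0$ and Theorem~\ref{36} delivers the global embedding. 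The main obstacle is precisely this last subcase: one has to use that the local counting variables $x_w$ are strictly positive in order to conclude that a division place is non-split in $K$ and imposes $k\mid\delta_0$; the other cases reduce cleanly to Hasse-Brauer-Noether or to a direct invariant computation.
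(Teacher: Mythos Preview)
Your argument is correct and follows the same approach as the paper: case~(2) via $c=\delta_0$ directly, case~(3) via the special vectors $\bfx_w$ and Theorem~\ref{36}, and case~(1) by reduction to the splitting criterion (which the paper simply cites as well-known). The paper's treatment of~(3) is slightly more streamlined in that it does not split on $n$: at a matrix place $v$ one has $s_v=\delta_0$, so $\bfx_w=n\delta_0 c_w/k\in\bbN$ follows immediately from the standing hypothesis $k\mid n\delta_0$, making your detour through $k\mid\delta_0$ (deduced from a division place) correct but unnecessary.
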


\begin{proof}

\begin{itemize}
\item [(1)] This is a well-known result;
see Pierce~\cite[\S~18.4]{pierce} or
Prasad-Rapinchuk \cite[Proposition
A.1]{prasad-rapinchuk:metakernel96}.

%  (c.f. \cite[Section
%   2]{prasad-rapinchuk:embed10}); we give a proof for the reader's
%   convenience. By Lemma~\ref{26}, we have
% \\
% $K_v=\prod\limits_{w|v} K_w$ can be embedded into $A_v$ for all
% $v\in V^F$.
% \\
% $\Longleftrightarrow K_w$ splits $A_v$ for all $v$ and for all
% $w|v$.
% \\
% $\Longleftrightarrow K$ splits $A$.
% \\
% $\Longleftrightarrow K$ can be embedded into $A$, as one has
% $[K:F]=\deg A$ (see \cite[3.3 Theorem and Corollary, p.~241]{pierce}).

\item [(2)] Since  $K$ splits $A$, we have $d'_w=1$ for all $w\in V^K$
  and hence $\delta'=\lcm\limits_w
\{d'_w\}=1$ and $c=\delta_0$.
Thus, \[k\mid n\delta_0\Longleftrightarrow k\mid nc.\]
This verifies the condition (\bfG) and hence $\Hom_F(K,A)\neq \emptyset$.

\item [(3)] Suppose there exists an embedding of $K_v$ into $A_v$
  for all $v \in V^F$.
If $A_v$ is a central division algebra, then
$K_v=K_w$ is a field extension over $F_v$ with $[K_w:F_v]=k$ and
$c_w=k$. Then we have
\[
\bfx_w=ns_vc_w/k=n\in \bbN,\quad  \forall\, w \mid v.
\]
For the other case that $A_v$ is a matrix algebra over $F_v$,
we have $s_v=\delta_0$. Thus, by the assumption $k\mid
n\delta_0$, we have
\[
\bfx_w=ns_vc_w/k=n\delta_0 c_w/k \in \bbN,\quad  \forall\, w \mid v.
\]
By Theorem~\ref{36}, the set $\Hom_F(K,A)$ is not empty. \qed
\end{itemize}
\end{proof}

%\begin{prop}\label{425}
% Notation as in Proposition ~\ref{42}. Assume that the extension  $K/F$
% is Galois. Then the Hasse principle for the pair $(K,A)$ holds.
%\end{prop}

%\begin{proof}
% Given any $v\in V^F$, since $K$ is Galois over $F$,
% $K_v=\prod\limits_{i=1}^{m_v}K_{w_i}$ for some $m_v\in \bbN$ with
% $[K_{w_i}:F_v]=[K_{w_1}:F_v]=k_{w_1}$ for all $i=1.\ldots,m_v$.
% Therefore, $c_{w_1}=c_{w_i}$, $l_{w_1}=l_{w_i}$ for all
% $i=1,\ldots,m_v$ and $k=m_v k_{w_1}$. Suppose there exists an
% embedding of $K_v$ in $A_v$, then the equation
%\[
% \sum\limits_{i=1}^{m_v}l_{w_i}X_{w_i}=m_vl_{w_1}\sum\limits_{i=1}^{m_v}X_{w_i}=ns_v
%\]
% is solvable in $\bbN^{m_v}$, say $(x_1,\dots,x_{m_v})$. It follows
% from $l_{w_1}=k_{w_1}/c_{w_1}$ that
%\[
% ns_v/l_{w_1}m_v=ns_vc_{w_1}/m_vk_{w_1}=ns_vc_{w_1}/k=\sum\limits_{i=1}^{m_v}x_{w_i}\in \bbN.
%\]
% Furthermore, for all $i=1,\ldots,m_v$, $c_{w_i}=c_{w_1}$ implies
% that $\bfx_{w_i}=ns_vc_{w_i}/k=n\in \bbN$. By Theorem ~\ref{36},
% $\Hom_F(K,A)$ is not empty. \qed
%end{proof}

\subsection{Construction of counterexamples}
\label{sec:42}

Let $K$ be a finite field extension over $F$ with
$[K:F]=k=p_1^{m_1}\cdots p_r^{m_r}$, and $A$ be a central division
algebra over $F$ of degree $\delta=p_1^{n_1}\cdots p_r^{n_r}$, where
$p_i$ is a prime number, $n_i\in \bbN$, and $m_i\in \bbZ_{\geq 0}$
with $m_i\leq n_i$ for $i=1,\ldots, r$.

%  In addition, for any
% positive integer $s$, we use the symbol $K_s$ to denote a unspecfied
% finite field extension over $F_v$ of degree $[K_s:F_v]=s$; only the
% degree of this field plays a role in our study. The reader
% may take $K_s$ to be the unramified field extension for example.

We shall construct examples for which the Hasse principle does
not hold. Recall (Definition~\ref{41}) that the Hasse
principle for a pair $(k,\delta)$ does not hold if there exists a
pair $(K,A)$ such that $\Hom^*_{F_v}(K_v, A_v)\neq \emptyset$
for all $v\in V^F$ but $\Hom_F(K,A)=\emptyset$.
%the Hasse principle fails

The construction is to set local data $s_v$,
$d_v$, and a partition % $k=\sum\limits_{w\mid v} k_w$
$k=\sum_{w\mid v} k_w$ of the integer
$k$ for some place $v$ of $F$
so that some $w$-component $\bfx_w$ of the special vector
$\bfx_v$ is not integral. Then we apply Theorem~\ref{36} to conclude
that the Hasse principle for certain pair $(K,A)$ does not hold.
To ensure that the data $k=\sum_{w\mid v} k_w$
% $k_v=\sum\limits_{w\mid v} k_w$
come from a global field $K$, we
need the following result.

% This basically follows from the Hilbert
% irreducibility theorem, the weak approximation and Krasner's lemma.

\begin{lemma}[c.f. {\cite[Lemma 3.2]{yu:const}}]\label{43}
Let S be a finite subset of $V^F$.
Let $L_v$, for each $v\in S$, be any etale algebra over
$F_v$ of same degree $[L_v:F_v]=n$.
Then there is a finite separable field extension $K$
over $F$ of degree $n$ such that $K \otimes_F F_v\simeq L_v$ for all
$v \in S$.
\end{lemma}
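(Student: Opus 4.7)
The plan is to realize $K$ as $F[x]/(f(x))$ for a suitably chosen monic separable polynomial $f(x)\in F[x]$ of degree $n$, constructed by weak approximation so as to match each prescribed local algebra $L_v$ and to be globally irreducible. First, for each $v\in S$, decompose $L_v=\prod_{i}L_{v,i}$ into its field factors; choosing a primitive element for each factor, write $L_v\cong F_v[x]/(g_v(x))$, where $g_v(x)=\prod_i g_{v,i}(x)$ is a product of pairwise coprime monic separable irreducible polynomials in $F_v[x]$ of total degree $n$.

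Next I would enlarge $S$ by one extra place $v_0\notin S$ that is non-archimedean (the global field $F$ has infinitely many such places). Over $F_{v_0}$ there exists a separable field extension of degree $n$, for instance the unramified extension of degree $n$; let $g_{v_0}(x)\in F_{v_0}[x]$ be a corresponding monic irreducible separable polynomial of degree $n$. By weak approximation applied coefficient-wise, I can choose a monic polynomial $f(x)\in F[x]$ of degree $n$ whose coefficients are $v$-adically arbitrarily close to those of $g_v(x)$ for all $v\in S\cup\{v_0\}$.

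If the approximation is taken sufficiently good, then Hensel's lemma lifts the factorization of $g_v$ at each $v\in S$ to a factorization $f=\prod_i f_{v,i}$ in $F_v[x]$ with each $f_{v,i}$ close to $g_{v,i}$; Krasner's lemma then gives $F_v[x]/(f_{v,i})\cong F_v[x]/(g_{v,i})$, hence $F_v[x]/(f(x))\cong L_v$. At $v_0$, sufficient closeness together with Krasner's lemma forces $f(x)$ to be irreducible in $F_{v_0}[x]$, and since irreducibility at one place implies irreducibility globally, $f(x)$ is irreducible over $F$. Then $K:=F[x]/(f(x))$ is a separable field extension of $F$ of degree $n$ with $K\otimes_F F_v\cong L_v$ for all $v\in S$, as required.

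The main obstacle is ensuring simultaneously that $f$ produces the correct étale structure (not just the right degree) at every prescribed place and that $f$ is globally irreducible. Both are handled by quantitatively controlling the $v$-adic distance between $f$ and $g_v$: Hensel plus Krasner converts local closeness into isomorphism of the resulting étale algebras, while the auxiliary place $v_0$ provides a single place of irreducibility that propagates globally. The degrees work out because $\sum_i\deg g_{v,i}=n$ at each $v\in S$, and weak approximation leaves the degree of $f$ fixed at $n$.
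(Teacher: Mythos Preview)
The paper does not give its own proof of this lemma; it is simply quoted from \cite[Lemma~3.2]{yu:const}. Your argument is correct and is the standard one: realize each prescribed local \'etale algebra as $F_v[x]/(g_v)$, use weak approximation to produce a global monic $f\in F[x]$ close to every $g_v$, invoke Krasner (or continuity of roots at archimedean places) to recover $L_v$ locally, and force global irreducibility via an auxiliary non-archimedean place $v_0$ where $f$ is close to an irreducible polynomial. One small point worth making explicit is that when $L_v$ has repeated (isomorphic) field factors, the primitive elements must be chosen so that the $g_{v,i}$ are pairwise distinct, ensuring $g_v$ is separable; this is always possible since $F_v$ is infinite.
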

Moreover, one also needs to ensure that the data $s_v$ and $d_v$
come from a central division algebra $A$ over $F$. For this, we
need the global class field theory; see \cite[\S~32]{reiner:mo} or
\cite{cassels-frohlich:ant}.

\begin{thm}\label{44}
Let $S$ be a finite subset of $V^F$.
For any positive integer $\delta$,
suppose we are given any set of rational numbers
$\{a_v/d_v\}_{v\in S}$ with $\gcd(a_v,d_v)=1$ such that

\begin{itemize}

\item [(1)] $\lcm\limits_{v\in S} \{d_v\}=\delta$.

\item [(2)] $\sum\limits_{v\in S} a_v/d_v=0\in \bbQ/\bbZ$.

\item [(3)] $d_v=1$ if $v$ is complex.

\item [(4)] $d_v=1$ or $2$ if $v$ is real.
\end{itemize}

Then up to isomorphism, there is a unique central division algebra
$A$ over $F$ with $\deg A=\delta$ such that $\inv
A_v=a_v/d_v\in\bbQ/\bbZ$ for all $v\in S$ and $\inv A_v=0\in \bbQ/\bbZ$
for all $v\notin S$.
\end{thm}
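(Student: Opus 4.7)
The plan is to deduce Theorem~\ref{44} directly from the fundamental exact sequence of global class field theory, namely
\[
0\longrightarrow \Br(F)\longrightarrow \bigoplus_{v\in V^F}\Br(F_v)
\xrightarrow{\sum_v\inv_v} \Q/\Z \longrightarrow 0,
\]
together with the explicit description of the local invariant maps $\inv_v:\Br(F_v)\to \Q/\Z$ (an isomorphism onto $\Q/\Z$ for $v$ non-archimedean, onto $\tfrac12\Z/\Z$ for $v$ real, and zero for $v$ complex). See \cite[\S~32]{reiner:mo} or \cite{cassels-frohlich:ant}.

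First, I would package the given data into an element of $\bigoplus_v\Br(F_v)$. For each $v\in S$, conditions (3) and (4) guarantee that $a_v/d_v$ lies in the image of $\inv_v$, so it defines a class $\alpha_v\in\Br(F_v)$; for $v\notin S$, set $\alpha_v=0$. Since $S$ is finite, the resulting tuple $(\alpha_v)_v$ indeed lies in the direct sum. Condition (2) asserts precisely that $\sum_v \inv_v(\alpha_v)=0$ in $\Q/\Z$, so $(\alpha_v)_v$ lies in the kernel of the sum map and hence, by the exact sequence, is the image of a unique class $[A_0]\in\Br(F)$.

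Next, I would identify this Brauer class with a division algebra of the prescribed degree. Let $A$ be the unique central division algebra over $F$ (up to $F$-isomorphism) representing $[A_0]$; uniqueness of the division representative in a Brauer class follows from the Artin-Wedderburn theorem. The Hasse-Brauer-Noether theorem (used twice in Theorem~\ref{11}(3), cf.\ (\ref{eq:311})) computes the global index as $\i(A)=\lcm_{v\in V^F}\{d_v\}=\lcm_{v\in S}\{d_v\}=\delta$ by condition (1), where the middle equality uses that $d_v=1$ for $v\notin S$. Since $A$ is a division algebra, $\deg(A)=\i(A)=\delta$, and by construction $\inv_v(A_v)=a_v/d_v$ for $v\in S$ and $\inv_v(A_v)=0$ otherwise.

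Finally, uniqueness of $A$ is immediate: any two central division algebras over $F$ with the same local invariants define the same class in $\Br(F)$ by injectivity of $\Br(F)\to\bigoplus_v\Br(F_v)$, hence are $F$-isomorphic. There is no real obstacle here beyond correctly citing the existence half of the Albert-Brauer-Hasse-Noether theorem; the only minor point requiring attention is verifying that the prescribed $a_v/d_v$ genuinely lie in the image of the local invariant map at archimedean places, which is exactly what conditions (3) and (4) are designed to ensure.
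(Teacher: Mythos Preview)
Your proof is correct and follows exactly the route the paper has in mind: the paper does not give its own proof of Theorem~\ref{44} but simply cites global class field theory via \cite[\S~32]{reiner:mo} and \cite{cassels-frohlich:ant}, and your argument is precisely the standard deduction from the fundamental exact sequence together with the Hasse--Brauer--Noether computation of the global index as the $\lcm$ of the local indices.
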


Let $(k,\delta)$ be a pair of positive integers with $k\mid \delta$.
For any partition $\lambda$ of $k$
\[ \lambda=(k_1,\dots, k_t), \quad k_1\le \dots \le k_t,
\quad \sum_{i=1}^t k_i=k \]
and a positive integer $s$ dividing $\delta$,
% decomposition $\mu=(s,d)$ of $\delta=sd$ into the product of two
% positive integers,
we set
\[ \calE(\lambda,s):=\{(x_i)\in \bbN^t \mid \sum_{i=1}^t \ell_i
x_i=s\}, \]
where $\ell_i:=k_i/\gcd(k_i,d)$,
and define a vector $\bfx(\lambda,s)\in \Q^t$, where
\[ \bfx(\lambda, s):=(\bfx_i), \quad
\bfx_i=\bfx_i(\lambda,s):=s \gcd(k_i,d)/k, \quad \forall\,
i=1\dots, t. \]
Let $\LD (k,\delta)$ be the set of pairs $(\lambda, s)$ of
partitions $\lambda$ of $k$ and divisors $s$ of $\delta$ such
that $\calE(\lambda,s)$ is non-empty (``\LD'' stands for all
possible local decompositions).

We transform the problem of checking the Hasse principle for the pair
$(k,\delta)$ in purely combinatorial terms.

\begin{thm}\label{45}
Let $F$ be a global field and
let $(k,\delta)$ be a pair of positive integers with $k\mid\delta$.
\begin{itemize}
\item [(1)] Suppose for all elements $(\lambda,s)\in \LD(k,\delta)$,
  the vector $\bfx(\lambda,s)$ has integral components. Then the
  Hasse principle for $(k,\delta)$ holds.
\item [(2)] Suppose there is an element $(\lambda,s) \in
  \LD(k,\delta)$ such that $\bfx_i(\lambda,s)\not\in \bbN$ for some
  $i$. Then there are a finite separable field extension $K$ of degree
  $k$ over $F$ and a central division algebra $A$ of degree
  $\delta$ over $F$ so that the
  Hasse principle for $(K,A)$ fails. In particular, the Hasse
  principle for the pair $(k,\delta)$ fails.
\end{itemize}
\end{thm}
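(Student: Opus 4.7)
Here I would translate the hypothesis directly through Theorem~\ref{36}. For any pair $(K,A)$ with $[K:F]=k$, $\deg A=\delta$ satisfying $\Hom^*_{F_v}(K_v,A_v)\neq\emptyset$ for every $v\in V^F$, set $\lambda_v:=(k_w)_{w\mid v}$ and $s_v:=\c(A_v)$, so that $(\lambda_v,s_v)\in\LD(k,\delta)$ by Proposition~\ref{34}. The hypothesis of Part~(1) then forces every component of $\bfx(\lambda_v,s_v)$ to be integral, and comparison with~(\ref{eq:315}) identifies these components with the numbers $\bfx_w$ for $w\mid v$. Letting $v$ range over $V^F$ gives $\bar{\bfx}=0\in\bigoplus_{w\in V^K}\Q/\Z$, so Theorem~\ref{36} concludes the argument.

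\textbf{Part (2).} Suppose $(\lambda,s)\in\LD(k,\delta)$ has $\bfx_i(\lambda,s)\notin\bbN$ for some $i$; write $d:=\delta/s$. The plan is to realize the data $(\lambda,s)$ as the local decomposition at a single chosen place $v_0$ of an actual pair $(K,A)$ over $F$, while enforcing local solvability at all other places. First, I would choose a finite set $S$ of finite places of $F$ containing $v_0$ large enough to distribute the prime-power local indices needed for the global index to reach $\delta$. Applying Lemma~\ref{43}, I construct a finite separable extension $K/F$ of degree $k$ such that $K_{v_0}\simeq\prod_{i=1}^t L_i$ with $[L_i:F_{v_0}]=k_i$ and such that $K_v$ is a single field extension of degree $k$ for each $v\in S\setminus\{v_0\}$. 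Then, via Theorem~\ref{44}, I construct a central division algebra $A/F$ of degree $\delta$ by prescribing $\inv_{v_0}A=a_0/d$ with $\gcd(a_0,d)=1$, by assigning invariants of the form $1/p_j^{e_j}$ at auxiliary places for each prime power $p_j^{e_j}\,\|\,\delta$ not already accounted for by $d$, and by using one more place to balance the sum to zero in $\Q/\Z$.

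\textbf{Verification.} At $v_0$, $\calE_{v_0}=\calE(\lambda,s)$ is non-empty by hypothesis. At any auxiliary place $v\in S\setminus\{v_0\}$, the divisibility $\lcm(k,d_v)\mid\delta$ (which holds because $k,d_v\mid\delta$) provides a positive integer solution to $\ell_w x_w=\c(A_v)$ with $\ell_w=k/\gcd(k,d_v)$, since $K_v$ is a single field of degree $k$. At places where $A$ is split one has $d_v=1$, and any decomposition gives a solution. Proposition~\ref{34} then grants $\Hom^*_{F_v}(K_v,A_v)\neq\emptyset$ for every $v\in V^F$. On the other hand, at $v_0$ the special-vector formula~(\ref{eq:315}) gives $\bfx_{w_i}=\bfx_i(\lambda,s)\notin\bbN$, so $\bar{\bfx}\neq 0$ and Theorem~\ref{36} yields $\Hom^*_F(K,A)=\emptyset$, exhibiting the counterexample.

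\textbf{Main obstacle.} The delicate step is the simultaneous arithmetic bookkeeping in the CFT construction of~$A$: the local invariants must be chosen so that their denominators have $\lcm$ equal to $\delta$ (to guarantee $A$ is a division algebra of degree exactly $\delta$, rather than merely an algebra whose index divides $\delta$), their numerators remain coprime to the corresponding denominators as required by~(\ref{eq:311}), and their sum vanishes in $\Q/\Z$. These constraints interact nontrivially with the already fixed invariant $a_0/d$ at $v_0$ when $d$ is a proper divisor of $\delta$, so one must verify that the balancing invariant at the last auxiliary place admits a reduced representation with no undesired common factors with $\delta$; introducing enough auxiliary places resolves this classically, and the Hilbert irreducibility input needed for Lemma~\ref{43} then guarantees the global extension $K/F$ can be taken separable.
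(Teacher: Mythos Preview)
Your Part~(1) is essentially identical to the paper's argument.

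Your Part~(2) is correct, but the paper's construction is considerably simpler and sidesteps the very obstacle you flag. Instead of distributing prime-power invariants across many auxiliary places and then balancing, the paper uses exactly four finite places $v_1,v_1',v_2,v_2'$ with paired opposite invariants:
\[
\inv_{v_1}A=-\inv_{v_1'}A=\tfrac{1}{d},\qquad \inv_{v_2}A=-\inv_{v_2'}A=\tfrac{1}{\delta},
\]
and $\inv_v A=0$ elsewhere. The reciprocity sum is then zero by construction, and $\lcm(d,d,\delta,\delta)=\delta$ immediately forces $A$ to be a division algebra of degree $\delta$; no coprimality or bookkeeping is needed. The extension $K$ is taken via Lemma~\ref{43} to have partition $\lambda$ at both $v_1$ and $v_1'$, and to be a single field of degree $k$ at $v_2$ and $v_2'$. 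Local solvability at $v_2,v_2'$ is then automatic because $k\mid\delta$ gives $\ell_w=k/\gcd(k,\delta)=1$, and the special vector at $v_1$ equals $\bfx(\lambda,s)$ exactly as in your argument. Your more elaborate scheme does work (your observation that $k\mid\delta$ and $d_v\mid\delta$ imply $\lcm(k,d_v)\mid\delta$ handles the auxiliary places), but the paper's pairing trick makes the ``main obstacle'' disappear.
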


\begin{proof}
(1) Let $K$ be any finite field extension of $F$ of degree $k$ and $A$
be any central simple algebra over $F$ of degree $\delta$. For any
place $v$ of $F$, we get a partition $\lambda=(k_w)_{w|v}$ of $k$ and
a positive divisor $s$ of $\delta$ as the capacity of
$A_v=A\otimes F_v$. % and $d$ is the index of $A_v$.
The assumption
$\Hom^*_{F_v}(K_v,A_v)\neq \emptyset$ assures that
$\calE(\lambda,s)=\calE_v$ is non-empty. Therefore, the pair
$(\lambda,s)$ is an element in $\LD(k,\delta)$. Then the assumption
gives that $\bfx_v=\bfx(\lambda,s)$ has integral components. This
works for all $v\in V^F$. By Theorem~\ref{36},
we have $\Hom^*_F(K,A)\neq \emptyset$.

(2) Let $(\lambda,s)\in \LD(k,\delta)$,
where $\lambda=(k_1,\dots, k_t)$ and $s|\delta$,
be an element such that
$\bfx(\lambda,s)$ is not an integral vector.
Choose any finite set $S=\{v_1,v'_1,v_2,v'_2\}$ of 4 finite places of
$F$. Let $A$ be the central division algebra over $F$ of degree
$\delta$ with following local invariants
(Theorem~\ref{44}):
\begin{itemize}
\item $\inv_{v_1} (A)=-\inv_{v_1'}(A)=1/d$ and
$\inv_{v_2} (A)=-\inv_{v'_2} (A)= 1/\delta$.
\item $\inv_v(A)=0$ for all $v\not\in S$.
\end{itemize}
Let $L_{v_2}$ and $L_{v_2'}$ be any finite separable {\it field}
extensions of degree $k$ over $F_{v_2}$ and $F_{v_2'}$,
respectively. Let $L_{v_1}=\prod_{i=1}^t E_i$ and
$L_{v'_1}=\prod_{i=1}^t E'_i$ where $E_i$ (resp. $E'_i$) is a
separable field extension of $F_{v_1}$ (resp. of $F_{v'_1}$) of
degree $k_i$ for all $i$. By Lemma~\ref{43}, there exists a finite
separable field extension $K$ of $F$ of degree $k$ such that
$K\otimes F_v\simeq L_v$ for all $v\in S$. It follows from
$(\lambda,s)\in \LD(k,\delta)$ that $\calE_{v_1}$ and
$\calE_{v'_1}$ are non-empty. Also the sets $\calE_{v_2}$ and
$\calE_{v_2'}$ are non-empty as $K_{v_2}$ and $K_{v'_2}$ are fields.
Since $A_v$ is a matrix algebra for $v\notin S$, these sets
$\calE_v$ are non-empty, too. Finally, since $\bfx(\lambda,s)$ is
not integral, the special vector $\bfx_{v_1}=\bfx(\lambda,s)$
is not integral.
Therefore, we have constructed a pair $(K,A)$ for which the Hasse
principle fails. \qed
\end{proof}

\begin{prop}\label{46}
Let $\delta=p_1^{n_1}\cdots p_r^{n_r}$, and let $k$ be a positive integer
with $k>1$ and $k \mid \delta$, where $p_i$\!\!'s are prime divisors
of $\delta$.
% $n_i\in \bbN$ and $r\geq 2$.
Assume that $k \leq \delta/p_i^{n_i}$
and $p_i \mid k$ for some $1\leq i\leq r$ (so $r\ge 2$).
Then the Hasse principle
for the pair $(k,\delta)$ does not hold.
\end{prop}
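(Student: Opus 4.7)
The plan is to invoke Theorem~\ref{45}(2): it suffices to produce a pair $(\lambda,s)\in\LD(k,\delta)$ whose vector $\bfx(\lambda,s)$ has a non-integer component. After relabeling I may assume $p:=p_1$ satisfies $p\mid k$ and $k\le \delta/p^{n_1}$. Write $n:=n_1$, $m:=v_p(k)\ge 1$, $k=p^m k'$ with $\gcd(k',p)=1$, and $D:=\delta/p^{n}=\prod_{j\ge 2}p_j^{n_j}$, so $\gcd(D,p)=1$. From $k\mid\delta$ one gets $m\le n$ and $k'\mid D$, and the hypothesis $k\le D$ reads $p^m k'\le D$.

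I would then take $s:=D$ (so $d=\delta/s=p^n$) together with the partition
\[
\lambda=(\underbrace{k/p,k/p,\dots,k/p}_{p\ \text{parts}}).
\]
A direct calculation yields $\gcd(k_i,d)=\gcd(k/p,p^n)=p^{m-1}$, hence $\ell_i=(k/p)/p^{m-1}=k'$ for every $i$. The equation $\sum_{i=1}^{p}\ell_i x_i=s$ then collapses to $\sum_{i=1}^p x_i=D/k'$; this is a positive integer, and since $D/k'\ge p^m\ge p$ it admits a representation with each $x_i\ge 1$. Therefore $\calE(\lambda,s)\ne\emptyset$ and $(\lambda,s)\in\LD(k,\delta)$.

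Finally I would compute
\[
\bfx_i(\lambda,s)=\frac{s\gcd(k_i,d)}{k}=\frac{D\cdot p^{m-1}}{p^m k'}=\frac{D}{p\,k'}.
\]
Because $k'\mid D$ and $\gcd(D,p)=1$, the integer $D/k'$ is coprime to $p$, so $D/(pk')$ is not an integer. Hence every component of $\bfx(\lambda,s)$ is non-integral, and Theorem~\ref{45}(2) produces a separable field extension $K/F$ of degree $k$ together with a central division algebra $A/F$ of degree $\delta$ for which the Hasse principle fails.

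I expect the only real obstacle to be locating the right choice of $(s,\lambda)$: taking $s=\delta/p^n$ forces $d=p^n$ to absorb the entire $p$-part of $\delta$, and splitting $k$ into $p$ equal parts shaves exactly one factor of $p$ off $\gcd(k_i,d)$ compared with $\gcd(k,d)=p^m$. This ``missing'' factor of $p$ is precisely what obstructs integrality of $\bfx_i$ while keeping $\calE(\lambda,s)$ non-empty; the inequality $k\le \delta/p^n$ is needed exactly to guarantee the latter.
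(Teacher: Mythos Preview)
Your argument is correct and follows the paper's strategy: apply Theorem~\ref{45}(2) with $s=\delta/p^{n}$, so that $d=p^{n}$ is the full $p$-part of $\delta$, and then choose a partition of $k$ whose parts have too little $p$-content to make $\bfx_i=s\gcd(k_i,d)/k$ integral.

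The only difference is the choice of partition. The paper takes $\lambda=(1^{\oplus k})$, i.e.\ $k$ parts all equal to~$1$; then $\gcd(k_i,d)=1$, $\ell_i=1$, and $\bfx_i=s/k$, which is non-integral because $p\mid k$ but $p\nmid s$. You instead take $\lambda=((k/p)^{\oplus p})$; this also works, and the arithmetic (checking $\gcd(k/p,p^{n})=p^{m-1}$, verifying $D/k'\ge p$, etc.) goes through exactly as you wrote. The paper's choice is marginally simpler in that the computations are trivial, but your partition makes the mechanism you describe in your last paragraph---stripping off exactly one factor of $p$---more visible. Either way the hypothesis $k\le \delta/p^{n}$ is used in the same place, namely to ensure $\calE(\lambda,s)\ne\emptyset$.
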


\begin{proof}
Without loss of generality, we may assume $i=1$. We write
$k=p_1^{m_1}k'$ for some prime-to-$p_1$ integer $k'$.
Let
\begin{itemize}
\item[(i)] $s=\delta/p_1^{n_1}=p_2^{n_2}\cdots p_r^{n_r}$.
\item[(ii)] $\lambda=(1,\dots,1)=:(1^{\oplus k})$.

\end{itemize}
Then one has
\[
d=p_1^{n_1},\quad c_i:=\gcd(k_i, d)=1, \quad \ell_i:=k_i/c_i=1, \]
and
\[ \bfx_i=s/k=p_2^{n_2}\cdots p_r^{n_r}/p_1^{m_1}k'\notin \bbN,\quad
\forall i. \]
Note that the condition $k \leq\delta/p_1^{n_1}=s$ implies the equation
$X_1+\cdots+X_k=s$ has a positive integral solution,
i.e. the set $\calE(\lambda,s)$ is non-empty.
By Theorem~\ref{45}, the Hasse principle does not hold. \qed
\end{proof}

\begin{cor}\label{47}
  Let $\delta$ be a positive integer divisible by at least
  two primes. Then
  there is a positive integer $k$ with $k\mid \delta$ such that the
  Hasse principle for the pair $(k,\delta)$ does not hold.
\end{cor}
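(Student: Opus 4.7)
The plan is to deduce Corollary~\ref{47} directly from Proposition~\ref{46}, which was already proved in the excerpt. Since all the heavy combinatorial lifting has been done, the task reduces to exhibiting one value of $k$ (depending on $\delta$) that satisfies the hypotheses of that proposition; no new tools are required.

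Concretely, write $\delta=p_1^{n_1}\cdots p_r^{n_r}$, with the convention that $p_1<p_2<\dots<p_r$, so $p_1$ is the smallest prime divisor of $\delta$. The hypothesis of Corollary~\ref{47} guarantees $r\ge 2$. I would then take $k:=p_1$ and verify the four conditions needed to invoke Proposition~\ref{46} with index $i=1$: the divisibility $k\mid\delta$ is immediate; $k=p_1\ge 2>1$; $p_1\mid k$ is trivial; and the only non-trivial condition to check is the size bound $k\le \delta/p_1^{n_1}=p_2^{n_2}\cdots p_r^{n_r}$.

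This last inequality is easy because $r\ge 2$ forces $p_2$ to exist and to satisfy $p_2>p_1$ (distinct primes with $p_1$ smallest), so $p_1<p_2\le p_2^{n_2}\le p_2^{n_2}\cdots p_r^{n_r}$. Thus all hypotheses of Proposition~\ref{46} are met for the pair $(p_1,\delta)$, and the proposition delivers the failure of the Hasse principle for this pair, giving the desired $k=p_1$.

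I do not anticipate any obstacle: the proof is a one-line specialization of Proposition~\ref{46}, with the only genuine observation being that the smallest prime divisor of $\delta$ is strictly smaller than any single prime power coming from the other prime factors, which is what makes the required inequality free.
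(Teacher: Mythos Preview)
Your proposal is correct and follows essentially the same approach as the paper: both take $k=p_1$, the smallest prime divisor of $\delta$, and invoke Proposition~\ref{46} with $i=1$. You have simply spelled out the verification of the inequality $p_1\le \delta/p_1^{n_1}$ a bit more explicitly than the paper does.
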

\begin{proof}
  Write $\delta=p_1^{n_1}\cdots p_r^{n_r}$, $r\ge 2$, and assume that $p_1$
  is the smallest prime divisor. Let $k=p_1$. Then the corollary
  follows from Proposition~\ref{46}. \qed
\end{proof}

\begin{cor}\label{48}
   Let $\delta$ be a positive integer divisible by at least
   two primes. Then
   there are a central simple algebra $A$ of degree $\delta$ and a
   finite field extension of $F$ of degree $[K:F]\mid\delta$ so that the
   Hasse principle for the pair $(K,A)$ does not hold.
\end{cor}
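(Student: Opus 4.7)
The plan is to deduce this corollary immediately from Corollary~\ref{47} (i.e., the numbered Corollary 4.7 above) together with the definition of the Hasse principle for a pair of integers $(k,\delta)$.

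First I would apply Corollary~\ref{47} to $\delta$, which, under the hypothesis that $\delta$ is divisible by at least two primes, produces a positive divisor $k \mid \delta$ such that the Hasse principle for the pair $(k,\delta)$ fails. By Definition~\ref{41}(2), this failure means precisely that there exist a finite field extension $K/F$ of degree $k$ and a central simple algebra $A/F$ of degree $\delta$ for which the Hasse principle for the pair $(K,A)$ does not hold. Since $k \mid \delta$, we have $[K:F] \mid \delta$, which is what the corollary asks for.

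The ``hard part'' is really hidden in the previous results: Proposition~\ref{46} constructs, via Theorem~\ref{45}(2), explicit $(K,A)$ for which $\bfx(\lambda,s)$ fails to be integral, and Corollary~\ref{47} specializes to $k = p_1$. So no new work is required here; the statement is a purely logical unpacking of Corollary~\ref{47} through the definition of the Hasse principle for $(k,\delta)$. If one wanted a self-contained argument, one could alternatively re-quote the construction from the proof of Theorem~\ref{45}(2) with $\lambda = (1^{\oplus p_1})$ and $s = \delta/p_1^{n_1}$, obtaining an explicit separable field extension $K$ of degree $p_1$ and an explicit central division algebra $A$ of degree $\delta$ (with prescribed local invariants at four auxiliary places) whose associated special vector has a non-integral component at a place above $v_1$, so that Theorem~\ref{36} guarantees $\Hom_F^*(K,A) = \emptyset$ while $\Hom^*_{F_v}(K_v,A_v) \neq \emptyset$ for all $v \in V^F$.
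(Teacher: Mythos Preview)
Your proposal is correct and matches the paper's approach: the paper states Corollary~\ref{48} without proof, immediately after Corollary~\ref{47}, so it is intended as the direct unpacking of Corollary~\ref{47} via Definition~\ref{41}(2) that you describe. Your optional self-contained variant through Theorem~\ref{45}(2) is also fine but unnecessary.
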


% Proposition~\ref{46} tells us that when $\delta$ is divisible by at
% least two primes and $k$ is ``small'' comparable to $\delta$,
% the Hasse principle for $(\delta,k)$ fails. On the other hand, it is
% well-known that when $k=\delta$ the Hasse principle holds.
% if $\delta$ is much greater than $k$.
% Therefore, it is natural to ask the Hasse principle for a pair
% $(k,\delta)$
% when $\delta$ is a prime power or $k$ is very close to $\delta$.
% So we consider the following two more questions:

%\begin{itemize}

% \item [(Q4)] Does the Hasse principle for a pair $(k,\delta)$ hold when
%   $\delta=p^n$ is a prime power?

% \item [(Q5)] Does the Hasse principle for a pair
%   $(k, \delta)$ hold when $\delta/k$ is a prime?
% =(p_1^{n_1}p_2^{n_2},\delta/p_1)$

%\end{itemize}

% \input{sec5.tex}

%\input{sec6.tex}

\section{Hasse principle for homogeneous spaces}
\label{sec:07}

The space of all embeddings of an etale commutative algebra $K$ into
$A$ over $F$ forms an variety $X$ over $F$. It is naturally equipped
with an action by the algebraic group $A^\times$ through
conjugation. Therefore, the space $X$ is decomposed into several orbits
and each orbit is a homogeneous space of $A^\times$.  
In the previous section, we see some counterexamples for the Hasse
principle in a more combinatorial way. A closer look at these
counterexamples would find out that some of local points actually 
lie in different orbits. This is the main reason 
that causes the failure of the Hasse
principle. It turns out this is the only reason that fails the Hasse
principle. Namely, 
if we requires that all local points lie in the same orbit $Y$, then
$Y$ indeed has an $F$-rational point. This the main result of this section;
see Theorem~\ref{74}.    
  
% In this section we formulate the question in Section~\ref{sec:06} in
% terms of the Hasse principle for a homogeneous space.
% Then we give a
% proof of this Hasse principle (Theorem~\ref{74}).
% This gives an alternating proof of Theorem~\ref{67} which avoids
% long computations of Class Field Theory.
%\footnote{The authors are
%  grateful to the suggestion of a referee by bringing the question
%  into a problem for Hasse principle for homogeneous spaces
%  and his/her kind
%  instruction of the proof.}.
% The result Theorem~\ref{74} on the
% Hasse principle for certain homogeneous spaces may
% be interesting in its own right.

% Keep the notations in Section~\ref{sec:06}.

\subsection{$F$-kernels and $H^2$}
\label{sec:71}
We recall the definition and basic properties of the second
non-abelian Galois cohomology $H^2$. Our references are Springer
\cite{springer:h2} and Borovoi~\cite{borovoi:duke1993}.

Let $F$ be a field and let $F^\sep$ denote a separable closure of $F$.
Put $\Gamma_F:\Gal(F^\sep/F)$.   
Let $\ol G$ be a linear algebraic group over $F^\sep$ and
let $p: \ol G\to \Spec F^\sep$ be the structure morphism.

For $\sigma\in
\Gamma_F$, let $\sigma_\natural:\Spec F^\sep\to \Spec F^\sep$
denote the isomorphism induced by the map $\sigma^{-1}$. We have $(\sigma
\tau)_\natural=\sigma_\natural \tau_\natural$ for $\sigma,\tau\in
\Gamma_{F}$.

Let $\sigma\in \Gamma_F$. We say that an automorphism of schemes
$s:\ol G \to \ol G$ is a {\it $\sigma$-semialgebraic automorphism
of $\ol G$} if the following
diagram
\[
\begin{CD}
  \ol G  @>{s}>> \ol G \\
   @V{p}VV  @VV{p}V \\
  \Spec F^\sep @>{\sigma_\natural}>> \Spec F^\sep
\end{CD} \]
commutes and the induced morphism $s': \ol G \to \ol G^\sigma$
is an isomorphism of algebraic groups over $\Spec F^\sep$, where $\ol
G^\sigma:=\ol G\times_{\sigma_\natural} \Spec F^\sep$ is the base
change deduced by the morphism $\sigma_\natural$. An {\it semialgebraic
automorphism} $s$ of $\ol G$ is a $\sigma$-semialgebraic automorphism
of $\ol G$ for some $\sigma\in \Gamma_F$. Define $\gamma(s):=\sigma$
if $s$ is a $\sigma$-semialgebraic automorphism of $\ol G$.

\def\SAut{{\rm SAut\, }}
\def\Out{{\rm Out\, }}
\def\SOut{{\rm SOut\, }}
Let $\SAut \ol G$ denote the group of semialgebraic automorphisms of
$\ol G$. The map $\gamma: \SAut \ol G\to \Gamma_F$ is a
homomorphism. Let $\Aut \ol G$ denote the group of algebraic
automorphisms of $\ol G$ over $F^\sep$. We have an embedding $\Aut \ol
G \to \SAut \ol G$ and an exact sequence
\[
\begin{CD}
 1 @>>> \Aut \ol G @>>> \SAut \ol G @>{\gamma}>> \Gamma_F.
\end{CD} \]
Let $\Int \ol G$ denote the group of inner automorphisms of $\ol
G$. Set $\Out \ol G:=\Aut \ol G/\Int \ol G$ and
$\SOut \ol G:=\SAut \ol G/\Int \ol G$. We have an exact sequence

\begin{equation}
  \label{eq:71}
  \begin{CD}
 1 @>>> \Out \ol G @>>> \SOut \ol G @>{q}>> \Gamma_F.
  \end{CD}
\end{equation}

\begin{defn}
  An {\it $F$-kernel} is a pair $L=(\ol G,\kappa)$, where $\ol G$ is an
  $F^\sep$-group and $\kappa:\Gamma_F\to \SOut \ol G$ is a
  homomorphism satisfying the following two conditions:
  \begin{itemize}
  \item [(a)] $\kappa$ is a splitting of (\ref{eq:71});
  \item [(b)] $\kappa$ can be lifted to a continuous map
    $f:\Gamma_F\to \SAut \ol G$; here ``continuous'' means that the
    stabilizer in $\Gamma_F$ of any regular function $\phi\in
    F^\sep[\ol G]$ is open.
  \end{itemize}
\end{defn}

For an $F^\sep$-kernel $L=(\ol G,\kappa)$, the second Galois
cohomology set $H^2(F, L)=H^2(F, \ol G, \kappa)$ is defined as
follows (also see \cite[p.~221]{borovoi:duke1993}). A 2-cocycle is a pair
$(f,u)$ of continuous maps
\[ f:\Gamma_F\to \SAut \ol G, \quad u:\Gamma_F\times \Gamma_F \to \ol
G(F^\sep) \]
such that for any $\sigma, \tau, v\in \Gamma_F$, the following holds:
\begin{equation}
  \label{eq:72}
  \begin{split}
    {\rm int}(u_{\sigma,\tau})\circ f_\sigma\circ f_\tau &=f_{\sigma
    \tau},\\
    u_{\sigma,\tau v}\cdot f_\sigma(u_{\tau,v})&=u_{\sigma\tau, v}\cdot
    u_{\sigma,\tau}, \\
    f_\sigma\!\!\! \mod \Int \ol G& =\kappa(\sigma).
  \end{split}
\end{equation}

Let $Z^2(F,L)$ denote the set of 2-cocycles with coefficients in
$L$. The group $C(F,\ol G)$ of continuous maps $c:\Gamma_F\to \ol
G(F^\sep)$ acts on $Z^2(F,L)$ on the left by
\[ c \cdot (f,u)=(f',u') \]
where
\begin{equation}
  \label{eq:73}
  f_\sigma'={\rm int}(c_\sigma)\circ f_\sigma, \quad
  u'_{\sigma,\tau}=c_{\sigma\tau}\cdot u_{\sigma,\tau}\cdot
  f_\sigma(c_\tau)^{-1} \cdot c_\sigma^{-1}.
\end{equation}
The quotient set $H^2(F,L):=C(F,\ol G)\backslash Z^2(F,L)$ is called
the {\it second Galois cohomology set of $F$ with coefficients in $L$}. If
$(f,u)\in Z^2(F,L)$, we write $\Cl(f,u)$ for the cohomology class of
$(f,u)$ in $H^2(F,L)$. A {\it neutral 2-cocycle} is a cocycle of the
form $(f,1)$. A {\it neutral cohomological class} in $H^2(F,L)$ is the
class of a neutral cocycle.

Let $G$ be an $F$-group. Set $\ol G:=G\otimes_F F^\sep$. For
$\sigma\in \Gamma_F$, let $\sigma_*:\ol G=\ol G^\sigma \to \ol G$ be the
morphism induced by the fiber product
\[
\begin{CD}
  \ol G  @>{\sigma_*}>> \ol G \\
   @V{p}VV  @VV{p}V \\
  \Spec F^\sep @>{\sigma_\natural}>> \Spec F^\sep
\end{CD} \]
Then $\sigma_*$ is a $\sigma$-semialgebraic automorphism of $\ol
G$. We have $(\sigma\tau)_*=\sigma_* \tau_*$ for $\sigma,\tau\in \Gamma_F$. We obtain
a continuous homomorphism
\[ f_G: \Gamma_F\to \SAut \ol G, \quad \sigma\mapsto \sigma_* \]
which splits the exact sequence (\ref{eq:71}). Composing $f_G$ with
the homomorphism $\SAut \ol G \to \SOut \ol G$, we obtain a
homomorphism
\[ \kappa_G:\Gamma_F \to \SOut \ol G, \quad
\kappa_G(\sigma)=\sigma_*\!\!\! \mod \Int \ol G \]
and, thus, an $F$-kernel $L_G=(\ol G, \kappa_G)$. Set
$H^2(F,G):=H^2(F,L_G)$. When $G$ is commutative, $H^2(F,G)$ is the
group cohomology $H^2(\Gamma_F, G(F^\sep))$ and it is an abelian
group. In $H^2(F,G)$, we have a neutral class
$\Cl(f_G,1)$, which we denote by $n(G)$.

Suppose $H^2(F,L)$ of an $F$-kernel $L=(\ol G,\kappa)$ contains a
neutral class $\Cl(f,1)$. Then by Weil's descent theorem
there is an $F$-form $G$ of $\ol G$, unique up to $F$-isomorphism,
 such that $f=f_G$ and $\kappa=\kappa_G$. Let
\[ \psi:\Gamma_F \to (G/Z)(F^\sep)=\Int \ol G \]
be a cocycle, where $Z$ is the center of $G$.
The map $\psi$ defines an inner
form $G'=_\psi G$ of $G$ and gives rise to
a homomorphism $f':=f_{G'}$.
We have $f'_\sigma=\psi_\sigma
f_\sigma$ for all $\sigma\in \Gamma_F$ and $\kappa_{G'}=\kappa$.
This defines another neutral class $n(G')=\Cl(f',1)$ in
$H^2(F,L)$. Conversely, any neutral class arises in this way. Indeed,
one puts $\psi_\sigma:=f'_\sigma f_\sigma^{-1}$ and then the map
$\psi:\Gamma_F\to
\Int \ol G$ is a cocycle.

\subsection{Homogeneous spaces}
\label{sec:72}

Let $H$ be a connected reductive group over $F$. Let $X$ be a right
homogeneous space of $H$. The homogeneous space $X$ gives rise to an
$F$-kernel $L$ as follows. Let $x\in X(F^\sep)$ be an
$F^\sep$-point and let $\ol G$ be the stabilizer subgroup
of $x$ in $\ol H:=H\otimes_F F^\sep$. For $\sigma\in \Gamma_F$, write
\[ ^\sigma x=x\cdot h_\sigma   \]
where $h_\sigma\in H(F^\sep)$. As $\Gamma_F$ acts continuously on the
discrete set $X(F^\sep)$, we have a continuous map $h:\Gamma_F\to
H(F^\sep)$. The $\sigma$-semialgebraic automorphism of $\ol H$
\[ f_\sigma:={\rm int}(h_\sigma)\circ \sigma_* \]
takes $\ol G$ into itself. To see this, if $g\in \ol G$, then we have
$x\cdot g=x$ and $^\sigma\! x\cdot {}^\sigma\! g= {}^\sigma\! x$. So
$x\cdot
h_\sigma {}^\sigma\! g h_\sigma^{-1}=x$ and hence
$\kappa(\sigma)(g)=h_\sigma {}^\sigma\! g h_\sigma^{-1}\in \ol G$.
We regard $f_\sigma$ as a
$\sigma$-semialgebraic automorphism of $\ol G$.
Then $f:\Gamma_F\to \SAut \ol G$ is a a continuous map, the composition
\[ \kappa: \Gamma_F\to \SAut \ol G \to \SOut \ol G \]
is a homomorphism and this defines an $F$-kernel $L=(\ol G, \kappa)$.

A {\it principal homogeneous space of $H$ over $X$} is a pair
$(P,\alpha)$, where $P$ is a right principal homogeneous space of $H$
and $\alpha:P\to X$ is an $H$-equivariant $F$-morphism. The etale descent
shows that $\alpha$ is necessarily faithfully flat.

If $X$ has an $F$-rational point $x_0$, then there exists a principal
homogeneous space $(P,\alpha)$ over $X$. Indeed, one takes $P=H$ and
$\alpha(h)=x_0\cdot h$. Conversely, if there exists a principal
homogeneous space $(P,\alpha)$ over $X$ and $P$ has an $F$-rational
point $p_0$, then $X$ has an $F$-rational point $\alpha(p_0)$.

In \cite[1.20]{springer:h2} (also see \cite[7.7]{borovoi:duke1993}),
Springer defined a cohomology class $\eta(X)\in
H^2(F, L)$ associated to $X$.
Springer \cite[3.7]{springer:h2} and Borovoi \cite[7.7,
p.~235]{borovoi:duke1993}
proved the following result.

\begin{prop}\label{72}
  Notations as above. The class $\eta(X)$ is neutral if and only if
  there exists a principal homogeneous space $(P,\alpha)$ over $X$.
\end{prop}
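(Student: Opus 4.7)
The plan is to establish each implication separately, exploiting the fact that the class $\eta(X)$ depends on an auxiliary choice of base point $x\in X(F^\sep)$ in its cocycle construction but is intrinsic to $X$ under the canonical identifications of the $F$-kernels obtained from different base points. We will freely change base point when convenient.

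For $(\Leftarrow)$, assume $(P,\alpha)$ is given. Pick any $p\in P(F^\sep)$ and take $y:=\alpha(p)$ as a new base point for $X$. Because $H$ acts simply transitively on $P(F^\sep)$, there is a unique $\tilde h_\sigma\in H(F^\sep)$ with ${}^\sigma p=p\cdot \tilde h_\sigma$; evaluating ${}^{\sigma\tau}p$ in two ways gives the $1$-cocycle relation $\tilde h_{\sigma\tau}=\tilde h_\sigma\cdot\sigma(\tilde h_\tau)$. Applying the $H$-equivariant map $\alpha$ yields ${}^\sigma y=y\cdot \tilde h_\sigma$, so the $2$-cocycle $(\tilde f,\tilde u)$ representing $\eta(X)$ with respect to $y$ has $\tilde f_\sigma=\Int(\tilde h_\sigma)\circ \sigma_*$ and $\tilde u_{\sigma,\tau}=\tilde h_\sigma\sigma(\tilde h_\tau)\tilde h_{\sigma\tau}^{-1}=1$. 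The cocycle identity for $\tilde h$ translates via (\ref{eq:72}) into $\tilde f_\sigma\circ \tilde f_\tau=\tilde f_{\sigma\tau}$, so $(\tilde f,1)$ is neutral and hence so is $\eta(X)$.

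For $(\Rightarrow)$, suppose $\eta(X)=\Cl(f',1)$ is neutral. Unwinding the equivalence relation on $Z^2(F,L)$, there is a $1$-cochain $c:\Gamma_F\to \ol G(F^\sep)$ with $c\cdot(f,u)=(f',1)$, where $(f,u)$ is built from the original base point $x$. Substituting $f_\sigma(c_\tau)=h_\sigma\sigma(c_\tau)h_\sigma^{-1}$ and $u_{\sigma,\tau}=h_\sigma\sigma(h_\tau)h_{\sigma\tau}^{-1}$ into (\ref{eq:73}), one verifies that a suitable combination of $c_\sigma$ and $h_\sigma$ (the natural candidate is $h'_\sigma:=c_\sigma h_\sigma$, possibly modified in sign/order according to the precise convention of (\ref{eq:73})) yields a genuine $1$-cocycle $h':\Gamma_F\to H(F^\sep)$. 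Let $P$ be the right principal homogeneous space of $H$ attached to the class of $h'$ in $H^1(F,H)$; concretely, $P$ is the $F$-form of $H$ carrying a distinguished $F^\sep$-point $p_0$ with ${}^\sigma p_0=p_0\cdot h'_\sigma$. Define $\alpha:P\to X$ by $\alpha(p_0\cdot g):=x\cdot g$ for $g\in H(F^\sep)$; this is $H$-equivariant by construction. The map $\alpha$ is also $\Gamma_F$-equivariant, hence defined over $F$, because $h'_\sigma\cdot h_\sigma^{-1}=c_\sigma\in \ol G=\Stab_{\ol H}(x)$ forces $x\cdot h'_\sigma=x\cdot h_\sigma={}^\sigma x$, which translates $H$-equivariantly into $\alpha({}^\sigma p)={}^\sigma\alpha(p)$ for all $p\in P(F^\sep)$.

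The main obstacle is the algebraic bookkeeping in the forward direction: identifying the correct combination of $c_\sigma$ and $h_\sigma$ (for instance $h_\sigma c_\sigma^{-1}$ in place of $c_\sigma h_\sigma$, depending on the exact sign convention in (\ref{eq:73})) so as to produce a genuine $1$-cocycle in $H(F^\sep)$. Once this purely formal verification is carried out, the construction of $(P,\alpha)$ and the proof that it is a principal homogeneous space over $X$ reduce to Galois descent via the standard dictionary between $H^1(F,H)$ and right principal homogeneous spaces of $H$ over $F$.
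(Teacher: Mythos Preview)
The paper does not give a proof of this proposition at all: it simply attributes the result to Springer \cite[3.7]{springer:h2} and Borovoi \cite[7.7]{borovoi:duke1993}. Your proposal therefore does strictly more than the paper, and the argument you sketch is essentially the standard one from those references.

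Your backward direction is clean and correct. For the forward direction, the bookkeeping you flag as the ``main obstacle'' is in fact resolved by your first candidate. With the convention $u_{\sigma,\tau}=h_{\sigma\tau}\,\sigma(h_\tau)^{-1}h_\sigma^{-1}$ (which is the one forced by the first identity in (\ref{eq:72}) once one sets $f_\sigma=\Int(h_\sigma)\circ\sigma_*$), substituting $f_\sigma(c_\tau)=h_\sigma\,\sigma(c_\tau)\,h_\sigma^{-1}$ into the condition $u'=1$ from (\ref{eq:73}) unwinds to
\[
c_{\sigma\tau}h_{\sigma\tau}=(c_\sigma h_\sigma)\cdot\sigma(c_\tau h_\tau),
\]
so $h'_\sigma:=c_\sigma h_\sigma$ is a genuine $1$-cocycle in $H(F^\sep)$. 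Since $c_\sigma\in\ol G=\Stab_{\ol H}(x)$ one still has $x\cdot h'_\sigma=x\cdot h_\sigma={}^\sigma x$, and your construction of $(P,\alpha)$ and the descent of $\alpha$ to $F$ then go through exactly as you describe. So the proposal is correct; no alternative ordering is needed.
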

% \begin{proof}
%  See \cite[1.20
% \end{proof}

In particular, if $H^1(F,H)$ is trivial, then $\eta(X)$ is neutral if
and only if $X(F)$ is non-empty.

\subsection{Homogeneous spaces of embeddings of $K$ into $A$}
\label{sec:73} We return to the problem of embeddings of an etale
  algebra $K$ into a central simple algebra $A$. We remain $F$ a
  global field as in the previous sections. 
Let $X$ be the $F$-scheme that represents the functor
\[ X(R)=\Hom^*_{R-alg}(K\otimes_F R, A\otimes_F R) \]
for any commutative $F$-algebra $R$. The group $H=A^\times$ of
multiplicative group of $A$, viewed as an algebraic group over $F$,
acts naturally on $X$ on the left. We make the right $H$-action on $X$ by
setting $x\cdot h:=h^{-1} \cdot x$.

The geometric orbits of $X$ under $H$
are in one-to-one correspondence with the
elements in the orbit set
$\calO_{K\otimes F^\sep, A\otimes F^\sep}$. Therefore each geometric orbit
can be represented by a function $f:\Sigma\to \bbN$ on $\Sigma$, where 
$\Sigma:=\Hom_F(K, F^\sep)$. 
%  $\calC(K,A,\grF)$. 
Moreover, this correspondence is
  $\Gamma_F$-equivariant. We have the decomposition of geometric orbits
\[ \ol X:=X\otimes_F F^\sep=\bigcup_{f} \ol X_f,  \]
where each $\ol X_f$ is a homogeneous space of $\ol H:=H\otimes_F
F^\sep$ corresponding to the function $f$.
%If the set $\ol X_f(F^\sep)$ is $\Gamma_F$-stable or contains a
%$\Gamma_F$-stable Zariski dense subset, then there
%is a unique $F$-model $X_f$ of $\ol X_f$, called the canonical model
%of $\ol X_f$, such that the closed immersion $\ol X_f \hookrightarrow
%\ol X$ is defined over $F$.
As the correspondence $f\mapsto \ol X_f$ is $\Gamma_F$-equivariant,
the subvariety $\ol X_f$ is defined over $F$ 
if and only if the function $f$ is
$\Gamma_F$-invariant.

Let $\ol X_f$ be a geometric orbit which is defined over $F$. 
Let $X_f$ be the $F$-subscheme of $X$ whose base extension to $F^\sep$
is $\ol X_f$; this is a homogeneous space of
$H$ over $F$. As $f$ is $\Gamma_F$-invariant, 
there is a tuple $(m_1,\dots,
m_r)\in \bbN^r$ so that for any point $\ol \epsilon\in X_f(F^\sep)$
% (an $F^\sep$-embedding)
\[ \ol \epsilon:
K\otimes_F F^\sep \hookrightarrow \ol A:=A\otimes
F^\sep=\End_{F^\sep}(\ol V), \]
the induced $K\otimes F^\sep$-module $\ol V$ is
isomorphic to
\[ (K_1\otimes F^\sep)^{m_1}\oplus \dots \oplus (K_r\otimes
F^\sep)^{m_r}. \]
Fix an element $\ol \epsilon_0$ in $X_f(F^\sep)$.
The centralizer $\ol B$ of the subalgebra
$\ol \epsilon_0(K\otimes F^\sep)$ in $\ol A$ is
isomorphic to
\[ \Mat_{m_1}(K_1\otimes F^\sep)\times \dots \times
\Mat_{m_r}(K_r\otimes F^\sep), \]
and the stabilizer subgroup $\ol G$ of $\ol \epsilon_0$ is equal to the
multiplicative group $\ol B^\times$ (viewed as an algebraic group over
$F^\sep$).
Let $\ol Z$ be the center of $\ol G$.
The center $Z(\ol B)$ of $\ol
B$ is equal to $\ol \epsilon_0(K\otimes F^\sep)$ and we have $\ol
Z=Z(\ol B)^\times$.
% We now describe
The homomorphism
$\kappa:\Gamma_F\to \SOut \ol Z=\SAut \ol Z$
associated to $X_f$ gives an action on the center
\begin{equation}
  \label{eq:74}
  \ol Z(F^\sep)=Z(\ol B)^\times=\ol \epsilon_0(K\otimes
  F^\sep)^\times.
\end{equation}

\begin{lemma}\label{73} \
  \begin{enumerate}
  \item The center $Z$ of $(\ol G,\kappa)$ is isomorphic to
    $\prod_{i=1}^r \Res_{K_i/F} \Gm$.
% equal to the subtorus $\ol \epsilon_0(K)^\times$.
  \item Any $F$-form $G$ of $\ol G$ with
$\kappa_G=\kappa$ is of the form
$\GL_1(A_1) \times \dots \times \GL_1(A_r)$
% viewed as an algebraic  $F$-group,
for some central simple $K_i$-algebras $A_i$ of degree $m_i$.
\end{enumerate}
\end{lemma}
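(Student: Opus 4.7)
The plan is to first compute the action of $\kappa$ on the center $\ol Z$ explicitly, identify it with the standard Galois action on $(K\otimes F^\sep)^\times$, and then deduce the structure of any compatible $F$-form by standard Galois descent combined with the classification of $K_i$-forms of $\GL_{m_i}$.

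For (1), I would unwind the formula $f_\sigma = \Int(h_\sigma)\circ \sigma_*$ on $\ol Z = \ol\epsilon_0(K\otimes F^\sep)^\times$. The defining relation ${}^\sigma\ol\epsilon_0 = \ol\epsilon_0 \cdot h_\sigma = h_\sigma^{-1}\ol\epsilon_0 h_\sigma$ (coming from the right-action convention $x\cdot h = h^{-1}\cdot x$) together with the semilinearity identity $\sigma_*(\ol\epsilon_0(k)) = ({}^\sigma\ol\epsilon_0)({}^\sigma k) = h_\sigma^{-1}\ol\epsilon_0({}^\sigma k)h_\sigma$ for $k \in K \otimes F^\sep$ yields
\[
 f_\sigma(\ol\epsilon_0(k)) \;=\; h_\sigma \cdot h_\sigma^{-1}\ol\epsilon_0({}^\sigma k) h_\sigma \cdot h_\sigma^{-1} \;=\; \ol\epsilon_0({}^\sigma k).
\]
Thus $\ol\epsilon_0$ identifies $\ol Z(F^\sep)$ with $(K\otimes F^\sep)^\times$ as $\Gamma_F$-modules, and since $K = \prod_i K_i$, Weil descent gives $Z \cong \prod_{i=1}^r \Res_{K_i/F}\Gm$.

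For (2), the decomposition $\ol B \cong \prod_{i=1}^r \Mat_{m_i}(K_i\otimes F^\sep)$ yields
\[
 \ol G = \ol B^\times \;\cong\; \prod_{i=1}^r \GL_{m_i}(K_i\otimes F^\sep) \;=\; \prod_i \prod_{\tau\in\Sigma_i}\GL_{m_i, F^\sep}.
\]
By (1), $\kappa$ acts on the center $\ol Z$ by permuting the $\tau\in\Sigma_i$ within each $\Sigma_i$ as $\Gamma_F$ does. Since $\kappa_G = \kappa$ and the decomposition of $\ol G$ into factors indexed by $i$ is determined by that of its center, any $F$-form $G$ decomposes as $G = \prod_i G_i$ where $G_i$ is an $F$-form of $\Res_{K_i/F}\GL_{m_i}$ with the standard $\Gamma_F$-action on its center $\Res_{K_i/F}\Gm$. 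By Galois descent (equivalently Shapiro's lemma for torsors), $G_i = \Res_{K_i/F}G_i'$ for a $K_i$-form $G_i'$ of $\GL_{m_i}$. The well-known classification $H^1(K_i,\PGL_{m_i}) \cong \{\text{CSAs of degree } m_i \text{ over } K_i\}$ gives $G_i' = \GL_1(A_i)$, so $G_i = \GL_1(A_i)$ as $F$-group.

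The principal technical point is (1) --- keeping straight the semilinear versus $F^\sep$-linear nature of $\sigma_*$ on $\ol A = A\otimes F^\sep$ and the precise way $h_\sigma$ intertwines $\ol\epsilon_0$ with its Galois conjugate, in particular the sign/inversion conventions for the right action. Once (1) is pinned down, (2) is essentially formal: it combines the preservation of the product decomposition by the center, restriction of scalars, and the Skolem--Noether classification of $K_i$-forms of $\GL_{m_i}$.
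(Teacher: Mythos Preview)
Your proof is correct. Part (1) is essentially the paper's argument: both compute that $f_\sigma$ carries $\ol\epsilon_0(k)$ to $\ol\epsilon_0({}^\sigma k)$, identifying the $\Gamma_F$-module $\ol Z(F^\sep)$ with $(K\otimes F^\sep)^\times$ under the standard action. You do it by an explicit chase, the paper packages the same chase into the two commutative squares, but the content is identical.

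Part (2) is where you diverge. The paper stays inside $\ol A$: having shown $Z(F)=\ol\epsilon_0(K)^\times$, it looks at the $F$-subalgebra of $\ol A$ generated by $G(F)$, argues it is a semi-simple $F$-algebra $B$ with center $\ol\epsilon_0(K)$, and reads off $G=\GL_1(B)=\prod_i\GL_1(A_i)$. You instead argue purely cohomologically: once (1) pins down the outer action, any $G$ with $\kappa_G=\kappa$ is an inner twist of the quasi-split form $G_0=\prod_i\Res_{K_i/F}\GL_{m_i}$, and Shapiro plus $H^1(K_i,\PGL_{m_i})$ yields $G\cong\prod_i\GL_1(A_i)$. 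Your route is more standard and self-contained (it does not rely on realizing $G(F)$ inside $\ol A$, which in the paper's argument requires a word of justification since the $F$-structure on $G$ need not match that on $A$); the paper's route has the virtue of producing the algebras $A_i$ concretely as pieces of the centralizer, which is closer in spirit to how they are used in the proof of Theorem~\ref{74}. One small point: your sentence ``the decomposition of $\ol G$ into factors indexed by $i$ is determined by that of its center'' is doing real work (it is what rules out $\kappa$ mixing factors with equal $m_i$), and it is justified precisely because the primitive central idempotents of $\ol B$ sit in $\ol Z$ and are permuted by $\kappa$ according to (1).
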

\begin{proof}
(1) For any $\sigma\in \Gamma_F$, we have $^\sigma \ol
    \epsilon_0={\rm int}(h_\sigma^{-1}) \circ \ol \epsilon_0$
   for some element $h_\sigma \in \ol A^\times$. We have
   the commutative diagram:
\begin{equation}
   \label{eq:75}
   \begin{CD}
   K\otimes F^\sep @>{\ol \epsilon_0}>> A\otimes F^\sep \\
   @V{{\rm id}_K\otimes \sigma} VV @VV{{\rm id}_A\otimes \sigma}V \\
   K\otimes F^\sep @>{^\sigma \ol \epsilon_0}>> A\otimes F^\sep.
   \end{CD}
\end{equation}
As $\kappa(\alpha)={\rm int}(h_\sigma)\circ ({\rm id}_A\otimes
\sigma)$, we have the following commutative diagram:
\begin{equation}
   \label{eq:76}
   \begin{CD}
   K\otimes F^\sep @>{\ol \epsilon_0}>> A\otimes F^\sep \\
   @V{{\rm id}_K\otimes \sigma} VV @VV{\kappa(\sigma)}V \\
   K\otimes F^\sep @>{ \ol \epsilon_0}>> A\otimes F^\sep.
   \end{CD}
\end{equation}
This shows that $\kappa(\sigma)$ acts on $\ol \epsilon_0(K\otimes
F^\sep)$ on the second factor and hence $Z(F)=\ol \epsilon_0(K)^\times$.
The $F$-subalgebra generated by $Z(F)$ is equal to $\ol
\epsilon_0(K)$. Therefore, $Z$ is the subtorus $\ol
\epsilon_0(K)^\times$ and is isomorphic to
    $\prod_{i=1}^r \Res_{K_i/F} \Gm$. .

(2) Since the $Z(F)$ generates the $F$-subalgebra $\ol \epsilon_0(K)$, 
the $F$-subalgebra generated by $G(F)$ in $\ol A$ is a semi-simple
$F$-algebra containing $\ol \epsilon_0(K)$, which is of the 
form $A_1\times \dots \times A_r$, 
where each $A_i$ is a central simple algebra over
$\ol \epsilon_0(K_i)$. 
Therefore, $G$ is isomorphic to 
$\GL_1(A_1) \times \dots \times \GL_1(A_r)$. 
This completes the proof of the lemma. \qed
\end{proof}

% It is not hard to check that this action is nothing but
% the action of $\Gamma_F$ on the second component of each factor in
% (\ref{eq:74}). This shows that the center $Z$ of $L$ is isomorphic to
% $\prod_{i=1}^rR_{K_i/F} \Gm$. Any $F$-form $G$ of $\ol G$ with
% $\kappa_G=\kappa$ is necessarily of the form
% $A_1^\times \times \dots \times A_r^\times$
% for some central simple $K_i$-algebra $A_i$ of degree $m_i$.

\begin{thm}[The Hasse principle]\label{74}
  Let $X_f$ be a homogeneous space  of $H$ defined over $F$ and
  $(m_1,\dots,m_r)$ be the corresponding tuple as above. If
  $X_f(F_v)\neq \emptyset$ for all $v\in V^F$, then
  $X_f(F)\neq \emptyset$.
\end{thm}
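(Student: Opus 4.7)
The plan is to reformulate the existence of an $F$-rational point on $X_f$ as the neutrality of the Springer class $\eta(X_f) \in H^2(F, L)$ attached to the $F$-kernel $L = (\ol G, \kappa)$ of the homogeneous space, and then to deduce global neutrality from local neutrality via a Hasse principle for non-abelian $H^2$ of connected $F$-kernels with quasi-trivial center.

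First I would observe that $H = \GL_1(A)$ satisfies $H^1(F, H) = 1$ --- this is Hilbert 90 for central simple algebras, since right $A$-modules of the appropriate $F$-dimension are all free of rank one. By Proposition~\ref{72} and the remark after it, it therefore suffices to prove that $\eta(X_f)$ is a neutral class in $H^2(F, L)$. The local side is automatic: for each $v \in V^F$, the hypothesis $X_f(F_v) \neq \emptyset$ means that $(X_f)_{F_v}$ admits a principal homogeneous space (take $H_{F_v}$ with evaluation at an $F_v$-point), so Proposition~\ref{72} applied over $F_v$ shows that the localization $\mathrm{res}_v(\eta(X_f)) \in H^2(F_v, L)$ is neutral for every $v$.

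The heart of the argument is to globalize neutrality. Here I would invoke Borovoi's Hasse principle for neutral classes in $H^2$ of connected $F$-kernels, in the form where the center is quasi-trivial. By Lemma~\ref{73}(1), the center of $(\ol G, \kappa)$ is $Z \cong \prod_{i=1}^r \Res_{K_i/F} \Gm$, a quasi-trivial torus; Shapiro's lemma gives $H^2(F, Z) \cong \bigoplus_i \Br(K_i)$, and the Hasse--Brauer--Noether theorem is precisely the Hasse principle for this abelian $H^2$. The non-abelian principle then transfers abelian vanishing to neutrality of $\eta(X_f)$ through the abelianization map $H^2(F, L) \to H^2(F, Z)$, using Lemma~\ref{73}(2) which identifies every compatible $F$-form $G$ of $\ol G$ as a product $\prod_i \GL_1(A_i)$ over central simple $K_i$-algebras $A_i$ --- for such $G$, Hilbert 90 again furnishes $H^1(F, G) = 1$, so inner twists are controlled by the center. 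Once $\eta(X_f)$ is globally neutral, Proposition~\ref{72} yields a principal homogeneous space $(P, \alpha)$ of $H$ over $X_f$ defined over $F$; since $P$ is an $H$-torsor and $H^1(F, H) = 1$, we have $P \cong H$, so $P(F) \neq \emptyset$, and $\alpha$ sends an $F$-point of $P$ into $X_f(F)$.

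The main obstacle is step three: precisely invoking the correct version of the non-abelian Hasse principle (from Springer \cite{springer:h2}, Douai, and Borovoi \cite{borovoi:duke1993}) and verifying that its hypotheses hold for our $L$. In particular, one must check that the quasi-triviality of $Z$ forces the map on neutral classes $H^2(F, L) \to \prod_v H^2(F_v, L)$ to detect neutrality --- the reduction to the Brauer-group Hasse principle via the abelianization $\mathrm{ab}^2 : H^2(F, L) \to H^2(F, Z) = \bigoplus_i \Br(K_i)$ is exactly the device that delivers this. The remaining work is essentially bookkeeping between the Springer--Borovoi--Douai formalism and the explicit description of $\ol G$ and $Z$ recorded in Lemma~\ref{73}.
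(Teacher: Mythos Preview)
Your proposal is correct and follows essentially the same route as the paper: reduce to neutrality of the Springer class $\eta(X_f)$ via $H^1(F,H)=1$, identify the center of the kernel as the quasi-trivial torus $\prod_i \Res_{K_i/F}\Gm$, and pass from local to global neutrality through Hasse--Brauer--Noether. The paper carries out your third step more concretely: it fixes the quasi-split form $G_0=\prod_i \Res_{K_i/F}\GL_{m_i}$ as base point, uses Springer's torsor structure and Borovoi's Proposition~2.3 to rewrite ``$\eta(X_f)$ is neutral'' as the index condition $\i_{K_i}(A_i)\mid m_i$ for the Brauer classes $([A_i])\in\prod_i \Br(K_i)$ representing $\eta(X_f)$, and then invokes the local-global principle for the index rather than a black-box non-abelian Hasse principle.
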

\begin{proof}
  Since $H$ is an inner form of $\GL_m$, one has $H^1(F,H)=1$ and
  $H^1(F_v,H)=1$ by the Hilbert 90 theorem. By Proposition~\ref{72},
  the obstruction class $\eta(X_f)$ (resp.~$\eta_v(X_f)$) is neutral
  if and only if $X_f(F)\neq \emptyset$ (resp.~$X_f(F_v)\neq
  \emptyset$). Choose the quasi-split form
  $G_0=\prod_{i=1}^r \Res_{K_i/F} \GL_{m_i}$ of the isotropic
  subgroup $\ol G$ with $\kappa_{G_0}=\kappa$,
  and let $n(G_0)$ denote the neutral class
  $\Cl(f_{G_0},1)\in H^2(F,L)$. As $H^2(F,L)$ is non-empty, it is a
  principal homogeneous space under $H^2(F,Z)$; see \cite[1.17,
  p.~170]{springer:h2} . The base class
  $n(G_0)$ gives a natural bijection $H^2(F,Z)\simeq H^2(F,L)$. As
  $H^2(F,Z)=\Br(K_1)\times \dots \times \Br(K_r)$,
  the class $\eta(X_f)\in
  H^2(F,L)$ can be represented by $([A_1],\dots , [A_r])\in
  H^2(F,Z)$, where each $A_i$ is a central simple algebra over $K_i$.
  Moreover, the class $\eta(X)$ is neutral if and only if the class
  $([A_1],\dots ,[A_r])$ lies in the image of the boundary map
  $H^1(F, G_0/Z)\to
  H^1(F,Z)$ (see \cite[Proposition 2.3, p.~224]{borovoi:duke1993}),
  that is, the condition
\[ {\rm ind}_{K_i}(A_i)|m_i, \quad i=1,\dots
  ,r \]
holds.
Now under the assumption $X_f(F_v)\neq \emptyset$ for all $v\in
  V^F$ we get the condition
\[ {\rm ind}_{K_{i,w}}(A_i\otimes_{K_i}
  K_{i,w})|m_i\]
for all $i=1,\dots, r$ and $w\in V^{K_i}$. It follows from the
Hasse-Brauer-Noether theorem that ${\rm ind}_{K_i}(A_i)|m_i$, 
for $i=1,\dots ,r$ and hence 
$X_f(F)\neq \emptyset$. This completes the
proof of the theorem.\qed
\end{proof}

%\subsection{Alternative proof of Theorem~\ref{67}}
%\label{sec:74}
% We view $\iota_v$ as an element in $X(F_v)$. As $\iota_{v_1}\sim
% \iota_{v_2}$ over $\grF$ for any two places $v_1,v_2\in V^F$, there is
% a unique function $f\in \calC(K,A,\grF)$ such that $\iota_v\in \ol
% X_f(F_v^\sep)$ for all $v\in V^F$. Since the $H(F_v)$-orbit of
% $\iota_v$ is Zariski dense in $\ol X_f$ and is $\Gamma_{F_v}$-invariant
% (the orbit is even pointwisely invariant), $\ol X_f$ is defined over $F_v$
% for each $v\in V^F$. Since the intersection $\cap_v F_v$ in $\grF$
% is equal to $F$ (see \cite[p.~362]{cassels-frohlich:ant}),
% the subvariety $\ol X_f$ is canonically defined over
% $F$ with the model $X_f$. Applying to the Hasse principle for the
% homogeneous space $X_f$ (Theorem~\ref{74})
% as $X_f(F_v)$ is non-empty, there is an element
% $\iota\in X_f(F)$. This element $\iota$ is conjugate to $\iota_v$ over
% $F_v$ for all $v\in V^F$. This complete an alternative proof of
% Theorem~\ref{67} via the Hasse principle for homogeneous spaces. \qed

\section*{Acknowledgments}
% \begin{thank}
  The manuscript was revised during the third named author's stay
  at the IEM, Universit\"at Duisburg-Essen.
  He wishes to thank the IEM for
  kind hospitality and excellent working conditions.
  Yang and Yu were partially supported by grants NSC
  97-2115-M-001-015-MY3, 100-2628-M-001-006-MY4 and AS-99-CDA-M01.
  The authors are grateful to the referees for careful 
  readings and helpful
  comments, specially the suggestion of a referee by bringing into 
  the Hasse principle for homogeneous spaces
  and his/her kind instruction of the proof of Theorem~\ref{74}.

\end{document}